\definecolor{refkey}{gray}{0.5}
\definecolor{labelkey}{gray}{0.2}
\newtheorem{theorem}{Theorem}[section]
\newtheorem{proposition}[theorem]{Proposition}
\newtheorem{lemma}[theorem]{Lemma}
\newtheorem{corollary}[theorem]{Corollary}
\theoremstyle{definition}
\newtheorem{definition}[theorem]{Definition}
\theoremstyle{remark}
\newtheorem{remark}[theorem]{Remark}
\newtheorem{example}[theorem]{Example}
\newcommand{\R}{\mathbb{R}}
\newcommand{\sJ}{\mathsf{J}}
\newcommand{\ve}{\varepsilon}
\newcommand{\lra}{\longrightarrow}
\newcommand{\e}{\mathrm{e}}
\newcommand{\CAT}{\mathrm{CAT}}
\newcommand{\RCD}{\mathrm{RCD}}
\newcommand{\diam}{\mathrm{diam}}
\def\argmin{\mathop{\mathrm{arg\, min}}}
\begin{document}

\title{Discrete-time gradient flows in Gromov hyperbolic spaces}

\author{Shin-ichi OHTA\thanks{
Department of Mathematics, Osaka University, Osaka 560-0043, Japan
({\sf s.ohta@math.sci.osaka-u.ac.jp}),
RIKEN Center for Advanced Intelligence Project (AIP),
1-4-1 Nihonbashi, Tokyo 103-0027, Japan}}

\date{\empty}
\maketitle

\begin{abstract}
We investigate fundamental properties of the proximal point algorithm
for Lipschitz convex functions on (proper, geodesic) Gromov hyperbolic spaces.
We show that the proximal point algorithm from an arbitrary initial point
can find a point close to a minimizer of the function.
Moreover, we establish contraction estimates (akin to trees)
for the proximal (resolvent) operator.
Our results can be applied to small perturbations of trees.
\end{abstract}


\section{Introduction}

This article is devoted to an attempt to develop optimization theory
on ``non-Riemannian'' metric spaces.
Precisely, we study the discrete-time gradient flow for a convex function $f$ on a metric space $(X,d)$
built of the \emph{proximal} (or \emph{resolvent}) \emph{operator}
\begin{equation}\label{eq:Jf}
\sJ^f_{\tau}(x) :=\argmin_{y \in X} \bigg\{ f(y) +\frac{d^2(x,y)}{2\tau} \bigg\},
\end{equation}
where $\tau>0$ is the step size.
Iterating $\sJ^f_{\tau}$ is a well known scheme to construct a continuous-time gradient flow for $f$
in the limit as $\tau \to 0$ (we refer to \cite{Br} for the classical setting of Hilbert spaces
and to \cite{CL,GR,Mi} for some related works).
Generalizations of the theory of gradient flows to convex functions on metric spaces
go back to 1990s \cite{Jo,Jo2,Ma} and have been making impressive progress since then;
we refer to \cite{AGSbook,Ba,Babook} (to name a few) for the case of $\CAT(0)$-spaces,
\cite{OP1,OP2} for $\CAT(1)$-spaces,
\cite{Ly,Ogra,OP1,Pe,Sa} for Alexandrov spaces and the Wasserstein spaces over them,
and to \cite{St} for metric measure spaces satisfying the Riemannian curvature-dimension condition
($\RCD(K,\infty)$-spaces for short).
Here a \emph{$\CAT(k)$-space} (resp.\ an \emph{Alexandrov space} of curvature $\ge k$)
is a metric space with sectional curvature bounded from above (resp.\ below) by $k \in \R$,
and an \emph{$\RCD(K,\infty)$-space} is a metric measure space
of Ricci curvature bounded from below by $K \in \R$, in certain synthetic geometric senses.
These spaces are all ``Riemannian'' in the sense that
non-Riemannian Finsler manifolds are excluded.

The theory of gradient flows in $\CAT(0)$-spaces has found applications in optimization theory.
Some important classes of spaces turned out $\CAT(0)$-spaces
(such as phylogenetic tree spaces \cite{BHV}
and the orthoscheme complexes of modular lattices \cite[Chapter~7]{CCHO}; see also \cite{Babook}),
and then optimization in $\CAT(0)$-spaces
can be applied to solve problems in optimization theory (see, e.g., \cite{HH,Hi}).

Compared with the development of the theory of gradient flows in Riemannian spaces as above,
we know much less for non-Riemannian spaces (even for normed spaces).
Especially, the lack of the contraction (non-expansion) property is a central problem.
The aim of this article is to contribute to closing this gap.
For this purpose, we consider discrete-time gradient flows for convex functions
on Gromov hyperbolic spaces.

The \emph{Gromov hyperbolicity}, introduced in a seminal work \cite{Gr} of Gromov,
is a notion of negative curvature of large scale.
A metric space $(X,d)$ is said to be Gromov hyperbolic
if it is \emph{$\delta$-hyperbolic} for some $\delta \ge 0$ in the sense that
\begin{equation}\label{eq:Ghyp}
(x|z)_p \ge \min\{ (x|y)_p, (y|z)_p \} -\delta
\end{equation}
holds for all $p,x,y,z \in X$, where
\[ (x|y)_p:=\frac{1}{2} \big\{ d(p,x)+d(p,y)-d(x,y) \big\} \]
is the \emph{Gromov product}.
If \eqref{eq:Ghyp} holds with $\delta=0$,
then the quadruple $p,x,y,z$ is isometrically embedded into a tree.
Therefore, the $\delta$-hyperbolicity means that $(X,d)$ is close to a tree
up to local perturbations of size $\delta$ (cf.\ Example~\ref{ex:Ghyp}(e)).
Admitting such a local perturbation is a characteristic feature of the Gromov hyperbolicity;
this is a reason why the Gromov hyperbolicity plays a vital role in group theory
and some non-Riemannian Finsler manifolds (e.g., Hilbert geometry) can be Gromov hyperbolic
(see Example~\ref{ex:Ghyp} for a further account).
We refer to \cite{BCCM,CCDDMV,CCL,FIV} and the references therein
for some investigations on the computation of $\delta$.

Inspired by the success of the theory of gradient flows in $\CAT(0)$-spaces,
it is natural to consider gradient flows in Gromov hyperbolic spaces
(note that trees are $\CAT(k)$ for any $k \in \R$),
and then we should employ discrete-time gradient flows
because of the inevitable local perturbations.
Precisely, for a convex function $f:X \lra \R$,
we study the behavior of the proximal operator $\sJ^f_{\tau}$ as in \eqref{eq:Jf}.
Then, due to the possible local perturbations of size $\delta$,
only $\sJ^f_{\tau}$ for large $\tau$ (``giant steps'') is meaningful (see Example~\ref{ex:Ghyp}(c),
from which we find that any nontrivial estimate on the local behavior cannot be expected).
We remark that $\sJ^f_{\tau}(x) \neq \emptyset$ under a mild compactness assumption
(see the beginning of Subsection~\ref{ssc:thm1}).

Our first main result is the following (see \eqref{eq:Kconv} for the definition of the $K$-convexity).

\begin{theorem}[Tendency towards minimizer]\label{th:conv}
Let $(X,d)$ be a proper $\delta$-hyperbolic geodesic space,
and $f:X \lra \R$ be a $K$-convex $L$-Lipschitz function with $K \ge 0$, $L>0$
such that $\inf_X f$ is attained at some $p \in X$.
Then, for any $x \in X$, $\tau>0$, and $y \in \sJ^f_{\tau}(x)$, we have
\begin{equation}\label{eq:conv}
d(p,y) \le d(p,x) -d(x,y) +\frac{4\sqrt{2\tau L\delta}}{\sqrt{K\tau +1}}.
\end{equation}
In the case of $K>0$ and $\tau>K^{-1}$, we further obtain
\begin{equation}\label{eq:conv-d}
d(p,y) \le d(p,x)
 -\bigg( 1-\frac{1}{K\tau} \bigg) \frac{f(x)-f(p)}{L} +\frac{4\sqrt{2\tau L\delta}}{\sqrt{K\tau +1}}.
\end{equation}
\end{theorem}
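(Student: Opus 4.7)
Setting $R := (x|p)_y$, the identity $d(p,y) - d(p,x) + d(x,y) = 2R$ reduces the first inequality to showing $R \le 2\sqrt{2\tau L\delta}/\sqrt{K\tau+1}$. The strategy is to test the minimality of $y$ against points on a geodesic from $y$ to the minimizer $p$, while using $\delta$-hyperbolicity to control how $d(x,\cdot)$ evolves along this geodesic. Fix a geodesic $\gamma:[0,d(y,p)] \to X$ with $\gamma(0)=y$, $\gamma(d(y,p))=p$, and set $z_t := \gamma(t)$.

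Three inputs will be combined. First, since $(z_t|p)_y = t$ and $(p|x)_y = R$, applying $\delta$-hyperbolicity to the quadruple $(y;z_t,p,x)$ gives $(z_t|x)_y \ge \min\{t,R\} - \delta$; for $t \le R$ this unfolds to $d(x,z_t) \le d(x,y) - t + 2\delta$. Second, $K$-convexity of $f$ along $\gamma$ combined with $p = \argmin_X f$ yields the lower bound
\[
 f(y) - f(z_t) \ge \frac{t}{d(y,p)}\big(f(y)-f(p)\big) + \frac{K}{2}\,t\,(d(y,p)-t),
\]
while $L$-Lipschitz continuity gives the upper bound $f(y) - f(z_t) \le Lt$. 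Third, the minimality $y \in \sJ^f_\tau(x)$ tested at $z = z_t$ produces $d^2(x,z_t) - d^2(x,y) \ge 2\tau\,(f(y)-f(z_t))$.

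Squaring the distance estimate (legitimate since $R \le \min\{d(x,y),d(y,p)\}$, so $d(x,y) - t + 2\delta \ge 0$ for $t \le R$) and chaining with the minimality condition gives
\[
 2\tau\,(f(y)-f(z_t)) \le (2\delta - t)\,(2 d(x,y)+2\delta-t).
\]
Feeding in the Lipschitz-based lower bound $f(y)-f(z_t) \ge -Lt$ and optimizing over $t \in (2\delta,R]$ (the natural choice $t - 2\delta = 2\sqrt{\tau L\delta}$ balances the $-2\tau L t$ term against the quadratic $(t-2\delta)^2$) yields an inequality of the form $d(x,y) \le \tau L + 2\sqrt{\tau L\delta}$ when $R$ exceeds the optimal $t$; otherwise $R$ is bounded directly by that optimal $t$. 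The $\sqrt{K\tau+1}$-improvement in the denominator is then obtained by keeping the $K$-convexity contribution $\tau K t(d(y,p)-t)$ in parallel with the Lipschitz term and using $d(y,p) \ge R$.

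For the second inequality (assuming $K>0$ and $\tau > K^{-1}$), one augments the first with the lower bound $d(x,y) \ge (1-1/(K\tau))(f(x)-f(p))/L$. This in turn follows from the Lipschitz estimate $L\,d(x,y) \ge f(x)-f(y)$ together with $f(y)-f(p) \le (f(x)-f(p))/(K\tau)$, which should be derived by testing the minimality of $y$ against suitable points on the geodesic $[x,p]$ and invoking $K$-convexity, with $\tau > K^{-1}$ ensuring positivity of the relevant coefficient. Substitution into the first inequality then gives the claim. The main obstacle is the balancing in the final optimization: the additive $2\delta$-error from $\delta$-hyperbolicity, the linear Lipschitz contribution, and the quadratic term in $t$ interact subtly, and only a precise choice of $t$ produces the specific $\sqrt{\tau L\delta/(K\tau+1)}$-scaling rather than a weaker $O(\delta)$ or $O(\tau L)$ bound.
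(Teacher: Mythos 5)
Your reduction of \eqref{eq:conv} to $(x|p)_y \le 2\sqrt{2\tau L\delta}/\sqrt{K\tau+1}$ is correct, and each of your three inputs is individually valid, but the final optimization does not close, and this is a genuine gap rather than a technicality. Writing $u=t-2\delta$, your chained inequality $-2\tau Lt \le (2\delta-t)\bigl(2d(x,y)+2\delta-t\bigr)$ rearranges to $d(x,y) \le \tfrac{u}{2}+\tau L+\tfrac{2\tau L\delta}{u}$, so the branch ``$R$ exceeds the optimal $t$'' only yields $R \le d(x,y) \le \tau L+2\sqrt{\tau L\delta}$. The additive $\tau L$ cannot be removed by any choice of $t$, and it is genuinely there: for $f=L\,d(\cdot,p)$ on a long geodesic one has $d(x,y)=\tau L$ exactly, so bounding $R$ by $d(x,y)$ in that branch can never produce the $O(\sqrt{\tau L\delta})$ scaling (the other branch also carries a stray $+2\delta$ and no $\sqrt{K\tau+1}$). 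The root cause is that you feed in the Lipschitz lower bound $f(y)-f(z_t)\ge -Lt$ while discarding the convexity lower bound $f(y)-f(z_t)\ge 0$ that you yourself recorded. If you do use $f(z_t)\le f(y)$, the minimality of $y$ gives $d(x,z_t)\ge d(x,y)$ for every $t$, which against $d(x,z_t)\le d(x,y)-t+2\delta$ forces $t\le 2\delta$, hence $(x|p)_y\le 2\delta$; this is clean but has $O(\delta)$ rather than $O(\sqrt{\delta})$ scaling, so it implies \eqref{eq:conv} only when $\delta\le 2\tau L/(K\tau+1)$ and gives nothing in the complementary regime (where the a priori bound $d(x,y)\le 2\tau L$ is also insufficient once $K\tau>1$). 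So neither version of your single-geodesic argument establishes \eqref{eq:conv} for all $\delta$.

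The paper's proof of Proposition~\ref{pr:dpy} is organized differently, and the difference is exactly what produces the $\sqrt{2\tau L\delta/(K\tau+1)}$ scaling. It takes \emph{two} geodesics issuing from $y$, one toward $x$ and one toward $p$ (the latter chosen so that $f$ is $K$-convex along it), and compares the two points at distance $(x|p)_y$ from $y$ on them, which are $4\delta$-close by the tripod lemma (Lemma~\ref{lm:tripod}). Minimality of $y$ is tested along the geodesic toward $x$, where $d(x,\cdot)$ is exactly linear and no hyperbolicity error is incurred; convexity is used along the geodesic toward $p$; and hyperbolicity enters only once, as a $4L\delta$ additive error in the \emph{value} of $f$ between the two nearby points, rather than as a $2\delta$ error in $d(x,z_t)$ that gets multiplied by $d(x,y)$ upon squaring. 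The resulting quadratic inequality in $\bar t=(x|p)_y/d(x,y)$ is then solved, with a separate easy treatment of the negative-discriminant case. For the second assertion \eqref{eq:conv-d}, your plan is fine: testing minimality at $z=p$ together with $d^2(p,x)\le \tfrac{2}{K}(f(x)-f(p))$ gives $f(y)-f(p)\le (f(x)-f(p))/(K\tau)$, and combining with $f(x)-f(y)\le Ld(x,y)$ yields the same lower bound on $d(x,y)$ as Lemma~\ref{lm:dxy}, by a slightly different but valid route.
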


The inequality \eqref{eq:conv-d} ensures that,
if $f(x)$ is sufficiently larger than $f(p)=\inf_X f$ (relative to $\delta$),
then the operator $\sJ^f_{\tau}$ sends $x$ to a point closer to $p$.

We remark that, in the case of $K>0$ and $\tau >K^{-1}$,
the $K$-convexity and the $L$-Lipschitz continuity imply
\begin{equation}\label{eq:conv-f}
f(y) \le f(x) -\frac{(K\tau -1)^2(f(x)-f(p))^2}{2(KL)^2 \tau^3}
\end{equation}
regardless of the $\delta$-hyperbolicity.
Then, given $\ve>0$ and an arbitrary initial point $x_0 \in X$,
by recursively choosing $x_i \in \sJ^f_{\tau}(x_{i-1})$, we have
\begin{equation}\label{eq:fxN}
f(x_N) \le f(p) +\frac{KL\tau \sqrt{2\tau}}{K\tau -1} \ve
\end{equation}
for some $N <(f(x_0)-f(p))\ve^{-2}$.
Together with the $K$-convexity, \eqref{eq:fxN} yields
\begin{equation}\label{eq:dpxN}
d^2(p,x_N) \le \frac{2L\tau \sqrt{2\tau}}{K\tau -1} \ve
\end{equation}
(see Subsection~\ref{ssc:cor} for more details).
By a similar discussion based on \eqref{eq:conv-d},
we obtain the following estimate in $\delta$-hyperbolic spaces.

\begin{corollary}\label{cr:conv}
Let $(X,d)$ and $f$ be as in Theorem~$\ref{th:conv}$ with $K>0$ and $\tau>K^{-1}$.
Then, given $\ve>0$ and an arbitrary initial point $x_0 \in X$, we have
\begin{equation}\label{eq:dpxN+}
d^2(p,x_N) \le \frac{2L\tau}{K\tau -1} \bigg( \frac{4\sqrt{2\tau L\delta}}{\sqrt{K\tau +1}} +\ve^2 \bigg)
\end{equation}
for some $N <d(p,x_0)\ve^{-2}$.
\end{corollary}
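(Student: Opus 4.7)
The plan is to iterate the estimate \eqref{eq:conv-d} from Theorem~\ref{th:conv} and couple it with $K$-convexity to obtain a definite one-step decrease in $d(p,\cdot)$ whenever the current iterate is still far from $p$. Fix $x_0 \in X$ and, recursively, $x_{i+1} \in \sJ^f_{\tau}(x_i)$ (nonempty by properness). Write $c := 1-(K\tau)^{-1}>0$ and $A := 4\sqrt{2\tau L\delta}/\sqrt{K\tau+1}$, so that the identity $\tau/(K\tau-1) = 1/(cK)$ puts the target bound in the form
\[
d^2(p,x_N) \le \frac{2L}{cK}\bigl( A + \ve^2 \bigr).
\]

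The key algebraic input is the standard consequence of $K$-convexity at a minimizer,
\[
f(x) - f(p) \ge \frac{K}{2}\, d^2(p,x),
\]
obtained by evaluating the convexity inequality along the geodesic from $p$ to $x$ and letting the parameter tend to $0$. Substituting this into \eqref{eq:conv-d} applied to $(x,y) = (x_i, x_{i+1})$ yields
\[
d(p,x_{i+1}) \le d(p,x_i) - \frac{cK}{2L}\, d^2(p,x_i) + A.
\]
If $d^2(p,x_i) > 2L(A+\ve^2)/(cK)$, the second term on the right alone exceeds $A+\ve^2$, so $d(p,x_{i+1}) < d(p,x_i) - \ve^2$.

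Now let $N$ be the least index with $d^2(p,x_N) \le 2L(A+\ve^2)/(cK)$. If we had $N \ge d(p,x_0)\ve^{-2}$, the one-step decrease above would apply at every $i\in\{0,\dots,N-1\}$, and telescoping would give $d(p,x_0) \ge d(p,x_0) - d(p,x_N) > N\ve^2 \ge d(p,x_0)$, which is absurd. Hence $N < d(p,x_0)\ve^{-2}$, as required. The whole argument is soft once Theorem~\ref{th:conv} is in hand; the only care needed is the arithmetic rewriting $\tau/(K\tau-1) = 1/(cK)$ together with the conversion of $f(x_i)-f(p)$ into a quadratic lower bound in $d(p,x_i)$ via $K$-convexity at the minimizer, so I do not foresee a genuine obstacle.
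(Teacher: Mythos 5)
Your proposal is correct and follows essentially the same route as the paper: combine \eqref{eq:conv-d} with the $K$-convexity bound $f(x)-f(p)\ge \tfrac{K}{2}d^2(p,x)$ (the paper's \eqref{eq:Kcon}) to get a per-step decrease of more than $\ve^2$ in $d(p,\cdot)$ whenever $d^2(p,x_i)$ exceeds the stated threshold, then telescope. The arithmetic identity $\tau/(K\tau-1)=1/(cK)$ and the resulting threshold match the paper's exactly.
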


Note that, up to a constant depending on $\delta$,
the order $\ve^2$ in \eqref{eq:dpxN+} is better than $\ve$ in \eqref{eq:dpxN}.
We refer to \cite{Ba,OP1} for the convergence of discrete-time gradient flows
(i.e., $x_N$ converges to a minimizer of $f$) in metric spaces with upper or lower
sectional curvature bounds.

Our second main result establishes the contraction property of the proximal operator.

\begin{theorem}[Contraction estimates]\label{th:cont}
Let $(X,d)$ and $f$ be as in Theorem~$\ref{th:conv}$.
Take any $x_1,x_2 \in X$, $\tau>0$, and $y_i \in \sJ^f_{\tau}(x_i)$ for $i=1,2$,
and assume $d(p,y_1) \le d(p,y_2)$.
\begin{enumerate}[{\rm (i)}]
\item
If $d(p,y_1) \ge (x_1|x_2)_p$, then we have
\begin{equation}\label{eq:cont1}
d(y_1,y_2) \le d(x_1,x_2) -d(x_1,y_1) -d(x_2,y_2)
 +\frac{20\sqrt{2\tau L\delta}}{\sqrt{K\tau +1}} +24\delta.
\end{equation}
In the case of $K>0$ and $\tau >K^{-1}$, we further obtain
\begin{equation}\label{eq:cont1+}
d(y_1,y_2) \le d(x_1,x_2) -\bigg( 1-\frac{1}{K\tau} \bigg) \frac{f(x_1)+f(x_2)-2f(p)}{L}
 +\frac{20\sqrt{2\tau L\delta}}{\sqrt{K\tau +1}} +24\delta.
\end{equation}

\item
If $d(p,y_1) <(x_1|x_2)_p$, then we have
\begin{equation}\label{eq:cont2}
d(y_1,y_2) \le d(x_1,x_2) -(p|x_2)_{x_1} +C(K,L,D,\tau,\delta),
\end{equation}
where $D:=\max\{d(p,x_1),d(p,x_2)\}$ and
$C(K,L,D,\tau,\delta)=O_{K,L,D,\tau}(\delta^{1/4})$ as $\delta \to 0$.
\end{enumerate}
\end{theorem}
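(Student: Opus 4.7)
The approach is to combine Theorem~\ref{th:conv}, applied to each of $x_1\mapsto y_1$ and $x_2\mapsto y_2$, with a four-point analysis in the $\delta$-hyperbolic space. From \eqref{eq:conv} one reads off, for $i=1,2$,
\begin{equation*}
(y_i|x_i)_p \;\geq\; d(p,y_i) - \frac{\varepsilon_0}{2}, \qquad \varepsilon_0 := \frac{4\sqrt{2\tau L\delta}}{\sqrt{K\tau+1}},
\end{equation*}
which says that $y_i$ sits approximately on a geodesic from $p$ to $x_i$ in the Gromov-product sense. The starting identity is $d(y_1,y_2)=d(p,y_1)+d(p,y_2)-2(y_1|y_2)_p$, so the task reduces to a lower bound on $(y_1|y_2)_p$. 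Two applications of \eqref{eq:Ghyp} through $x_1$ and $x_2$ yield
\begin{equation*}
(y_1|y_2)_p \;\geq\; \min\bigl\{ (y_1|x_1)_p,\; (x_1|x_2)_p,\; (x_2|y_2)_p \bigr\} - 2\delta,
\end{equation*}
and the case distinction in the statement is precisely designed to govern which entry of this minimum is dominant.

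For part~(i), the hypotheses $d(p,y_1)\geq(x_1|x_2)_p$ and $d(p,y_1)\leq d(p,y_2)$, together with the Gromov-product bounds $(y_i|x_i)_p\geq d(p,y_i)-\varepsilon_0/2$, force both $(y_1|x_1)_p$ and $(x_2|y_2)_p$ to lie above $(x_1|x_2)_p-\varepsilon_0/2$; the effective minimum is therefore $(x_1|x_2)_p-\varepsilon_0/2$. Substituting this lower bound for $(y_1|y_2)_p$, using \eqref{eq:conv} to replace each $d(p,y_i)$ by $d(p,x_i)-d(x_i,y_i)+\varepsilon_0$, and invoking the identity $d(x_1,x_2)=d(p,x_1)+d(p,x_2)-2(x_1|x_2)_p$ yields \eqref{eq:cont1}; the precise constants $8\sqrt{2\tau L\delta}/\sqrt{K\tau+1}$ and $12\delta$ will emerge after collecting the contributions of the two applications of \eqref{eq:conv} and the two iterations of \eqref{eq:Ghyp}. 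The sharper statement \eqref{eq:cont1+} follows from the same calculation with \eqref{eq:conv-d} substituted for \eqref{eq:conv}, so that $d(x_i,y_i)$ is replaced by the $f$-gap lower bound $(1-1/(K\tau))(f(x_i)-f(p))/L$.

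Part~(ii) is the substantive case. Now $d(p,y_1)<(x_1|x_2)_p$, so the bottleneck in the minimum above is $(y_1|x_1)_p\approx d(p,y_1)$; the direct Gromov-product chain yields only $d(y_1,y_2)\leq d(p,y_2)-d(p,y_1)+O(\varepsilon_0+\delta)$, and the task reduces to controlling the ``overshoot'' $a:=(x_1|x_2)_p-d(p,y_1)\geq 0$ by a quantity of order $\delta^{1/4}$ relative to $K$, $L$, $D$, and $\tau$. My plan is to pit the proximal minimality
\begin{equation*}
f(y_1)+\frac{d(x_1,y_1)^2}{2\tau}\;\leq\; f(z)+\frac{d(x_1,z)^2}{2\tau}
\end{equation*}
against the $L$-Lipschitz bound $f(z)-f(y_1)\leq L\,d(z,y_1)$ and the $\delta$-hyperbolic estimate $d(z,y_1)\leq a+O(\varepsilon_0+\delta)$, where $z$ is an auxiliary point on a geodesic from $p$ to $x_1$ at distance $(x_1|x_2)_p$ from $p$ (a proxy for the approximate branching). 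Since $d(x_1,y_1)\approx(p|x_2)_{x_1}+a$ and $d(x_1,z)\approx(p|x_2)_{x_1}$ up to $\delta$-slack, expanding $d(x_1,y_1)^2-d(x_1,z)^2$ produces a quadratic inequality in $a$; its resolution, after absorbing the scale $\varepsilon_0=O(\sqrt{\delta})$, gives $a=O(\delta^{1/4})$ and hence the desired constant $C(K,L,D,\tau,\delta)$. The main obstacle is to carry out this balancing cleanly while retaining the explicit dependence on $K$, $L$, $D$, and $\tau$; the appearance of the exponent $1/4$ rather than $1/2$ or $1$ already signals that no purely geometric triangle estimate is sharp enough, and that the above optimisation trading an $f$-decrease against a quadratic slack in the proximal inequality is essential.
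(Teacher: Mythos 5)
Your part (i) follows essentially the paper's route: lower-bound $(y_1|y_2)_p$ by $(x_1|x_2)_p$ minus an error via two applications of \eqref{eq:d-hyp}, use Proposition~\ref{pr:dpy} to control $(x_i|p)_{y_i}$ (so that $y_i$ is almost on a geodesic from $p$ to $x_i$), and conclude with the Gromov-product identity. Your bookkeeping, as sketched, yields a slightly different constant (of the shape $3\varepsilon_0+4\delta$ rather than $2\varepsilon_0+12\delta$ with $\varepsilon_0=4\sqrt{2\tau L\delta}/\sqrt{K\tau+1}$), but this is cosmetic; the mechanism is the same.

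Part (ii) has a genuine gap: you reduce \eqref{eq:cont2} to the claim that the overshoot $a:=(x_1|x_2)_p-d(p,y_1)$ is $O_{K,L,D,\tau}(\delta^{1/4})$, and that claim is false. Already in a tripod ($\delta=0$) with branch point $b$ at large distance $(x_1|x_2)_p$ from $p$ and with $f$ decreasing steeply towards $p$, the proximal point $y_1$ for large $\tau$ lands far beyond $b$ on the segment from $b$ to $p$, so $a$ is of macroscopic size (comparable to $D$), while \eqref{eq:cont2} still holds. Your own balancing argument confirms this: the proximal minimality gives $f(z)-f(y_1)\ge\big(2a\,d(x_1,z)+a^2\big)/(2\tau)-O(\sqrt{\delta})$, the Lipschitz bound gives $f(z)-f(y_1)\le La+O(\sqrt{\delta})$, and cancelling the common factor $a$ leaves only $a\le 2\tau L-2d(x_1,z)+o(1)$, an $O(1)$ bound with no power of $\delta$. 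What must actually be proved is the \emph{relative} estimate $a\le d(x_2,y_2)+O(\delta^{1/4})$ (equivalently $d(p,y_2)-d(p,y_1)\le d(p,x_2)-(x_1|x_2)_p+O(\delta^{1/4})$), and this requires comparing $y_1$ with $y_2$, not $y_1$ with the branch point. The paper does this by projecting each $y_i$ onto the geodesic $\gamma_i$ from $p$ to $x_i$, replacing it by the minimizer $z_i$ of the proximal objective restricted to $\gamma_i$ --- here the $(K+\tau^{-1})$-strong convexity of $t\mapsto f(\gamma_i(t))+d^2(x_i,\gamma_i(t))/(2\tau)$ converts a value discrepancy of order $\sqrt{\delta}$ into a distance discrepancy of order $\delta^{1/4}$, which is the true source of the exponent $1/4$ --- then transporting everything onto $\gamma_2$ with Lemma~\ref{lm:tripod}, and finally invoking the one-dimensional contraction Lemma~\ref{lm:1D} along $\gamma_2$ to compare the restricted proximal points with base points at parameters $(x_1|x_2)_p$ and $d(p,x_2)$. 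That one-dimensional contraction step is the idea missing from your plan, and without it the case $d(p,y_1)<(x_1|x_2)_p$ cannot be closed.
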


See Subsection~\ref{ssc:thm2} for a precise estimate of $C(K,L,D,\tau,\delta)$.
The inequalities \eqref{eq:conv}, \eqref{eq:cont1} and \eqref{eq:cont2} show that
$\sJ^f_{\tau}$ behaves like that in a tree (see Subsection~\ref{ssc:PPA})
up to a difference depending on $\delta$.
Note also that \eqref{eq:conv} can be regarded as a contraction estimate
between $p$ and $x \longmapsto y$.

For gradient curves $\gamma,\eta$ of a $K$-convex function on a Riemannian space,
the \emph{exponential contraction}
\[ d\big( \gamma(t),\eta(t) \big) \le \e^{-Kt} d\big( \gamma(0),\eta(0) \big) \]
is known as one of the most important properties and has a number of applications
from the uniqueness of gradient curves to the analysis of heat flow (see, e.g., \cite{AGSbook}).
For example, the exponential contraction of heat flow plays a significant role
in geometric analysis on $\RCD(K,\infty)$-spaces;
heat flow can be regarded as gradient flow of the relative entropy
in the $L^2$-Wasserstein space, and the $K$-convexity of the relative entropty
is exactly the definition of the curvature-dimension condition
(we refer to \cite{AGS,EKS,Vi}).
For non-Riemannian spaces (such as normed spaces and Finsler manifolds), however,
the exponential contraction is known to fail (see \cite{OSnc}).
To the best of the author's knowledge,
Theorem~\ref{th:cont} is the first contraction estimate
concerning gradient flows of convex functions on non-Riemannian spaces.

This article is organized as follows.
We briefly review the basics of Gromov hyperbolic spaces
and the proximal point algorithm in Section~\ref{sc:pre}.
Then Section~\ref{sc:proof} is devoted to the proofs of the main results
and discussions on possible further investigations.

\section{Preliminaries}\label{sc:pre}

For $a,b \in \R$, we set $a \wedge b:=\min\{a,b\}$ and $a \vee b:=\max\{a,b\}$.
Besides the original paper \cite{Gr}, we refer to \cite{Bo,BH,DSU,Ro,Va}
for the basics and various applications of Gromov hyperbolic spaces.

\subsection{Gromov hyperbolic spaces}\label{ssc:Ghyp}

We first have a closer look on the Gromov hyperbolicity mentioned in the introduction.
Let $(X,d)$ be a metric space.
For three points $x,y,z \in X$, define the \emph{Gromov product} $(y|z)_x$ by
\[ (y|z)_x :=\frac{1}{2} \big\{ d(x,y) +d(x,z) -d(y,z) \big\}. \]
Observe from the triangle inequality that
\begin{equation}\label{eq:Gpro-1}
0 \le (y|z)_x \le d(x,y) \wedge d(x,z).
\end{equation}
In the Euclidean plane $\R^2$,
$(y|z)_x$ is understood as the distance from $x$
to the intersection of the triangle $\triangle xyz$ and its inscribed circle
(see the left triangle in Figure~\ref{fig:Gprod}).
If $x,y,z$ are in a tripod, then $(y|z)_x$ coincides with the distance from $x$ to the branching point
(see the right figure in Figure~\ref{fig:Gprod}).

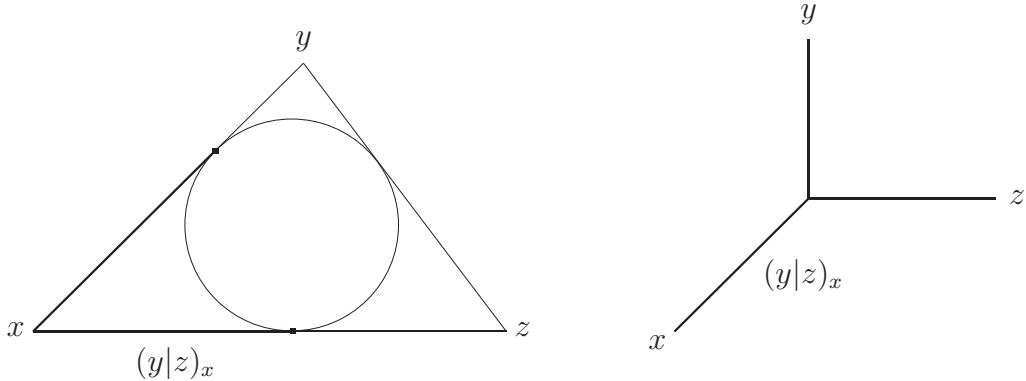
\begin{figure}
\centering
\begin{picture}(400,150)

\put(10,20){\line(1,0){177}}
\put(10,20){\line(1,1){101}}
\put(111,121){\line(3,-4){76}}
\put(106.5,60){\circle{80}}

\put(0,18){$x$}
\put(108,128){$y$}
\put(190,18){$z$}

\put(106,19){\rule{2pt}{2pt}}
\put(77,87){\rule{2pt}{2pt}}
\put(48,5){$(y|z)_x$}

\thicklines
\put(10,20){\line(1,0){96}}
\put(10,20){\line(1,1){68}}

\put(250,20){\line(1,1){50}}

\thinlines
\put(300,70){\line(0,1){60}}
\put(300,70){\line(1,0){70}}

\put(240,13){$x$}
\put(297,138){$y$}
\put(375,68){$z$}
\put(283,38){$(y|z)_x$}

\end{picture}
\caption{Gromov products in $\R^2$ and a tripod}\label{fig:Gprod}
\end{figure}

\begin{definition}[Gromov hyperbolic spaces]\label{df:Ghyp}
A metric space $(X,d)$ is said to be \emph{$\delta$-hyperbolic} for $\delta \ge 0$ if
\begin{equation}\label{eq:d-hyp}
(x|z)_p \ge (x|y)_p \wedge (y|z)_p -\delta
\end{equation}
holds for all $p,x,y,z \in X$.
We say that $(X,d)$ is \emph{Gromov hyperbolic}
if it is $\delta$-hyperbolic for some $\delta \ge 0$.
\end{definition}

The Gromov hyperbolicity can be regarded as a large-scale notion of negative curvature.

\begin{example}\label{ex:Ghyp}
\begin{enumerate}[(a)]
\item
Complete, simply connected Riemannian manifolds of sectional curvature $\le -1$
(or, more generally, $\CAT(-1)$-spaces)
are Gromov hyperbolic (see \cite[Proposition~H.1.2]{BH}).

\item
An important difference between $\CAT(-1)$-spaces and Gromov hyperbolic spaces
is that the latter admits some non-Riemannian Finsler manifolds such as Hilbert geometry
(see \cite{KN}, \cite[\S 6.5]{Obook}).
We also remark that, for the Teichm\"uller space of a surface of genus $g$ with $p$ punctures,
the Weil--Petersson metric (which is Riemannian and incomplete)
is known to be Gromov hyperbolic if and only if $3g-3+p \le 2$ (\cite{BF}),
whereas the Teichm\"uller metric (which is Finsler and complete)
does not satisfy the Gromov hyperbolicity (\cite{MW})
(see also \cite[\S 6.6]{Obook}).

\item
It is clear from \eqref{eq:Gpro-1} that the Gromov product
does not exceed the diameter $\diam(X):=\sup_{x,y \in X}d(x,y)$.
Hence, if $\diam(X) \le \delta$, then $(X,d)$ is $\delta$-hyperbolic.
This also means that the local structure of $X$ (up to size $\delta$)
is not influential in the $\delta$-hyperbolicity.

\item
The definition \eqref{eq:d-hyp} makes sense for discrete spaces.
In fact, the Gromov hyperbolicity has found rich applications in group theory
(a discrete group whose Cayley graph satisfies the Gromov hyperbolicity
is called a \emph{hyperbolic group}; we refer to \cite{Bo,Gr}, \cite[Part~III]{BH}).
In the sequel, however, we do not consider discrete spaces,
mainly due to the difficulty of dealing with convex functions (see Subsection~\ref{ssc:prob}).

\item
Assume that $(X,d)$ admits a map $\phi:T \lra X$ from a tree $(T,d_T)$ such that
$d(\phi(a),\phi(b))=d_T(a,b)$ for all $a,b \in T$
and that the $\delta$-neighborhood $B(\phi(T),\delta)$ of $\phi(T)$ covers $X$.
Then, since $(T,d_T)$ is $0$-hyperbolic, we can easily see that $(X,d)$ is $6\delta$-hyperbolic.
\end{enumerate}
\end{example}

We call $(X,d)$ a \emph{geodesic space}
if any two points $x,y \in X$ are connected by a (minimal) \emph{geodesic}
$\gamma:[0,\ell] \lra X$ satisfying $\gamma(0)=x$, $\gamma(\ell)=y$ and
$d(\gamma(s),\gamma(t))=(|s-t|/\ell) \cdot d(x,y)$ for all $s,t \in [0,\ell]$
(we will take $\ell=1$ or $\ell=d(x,y)$).
In this case, there are many characterizations of the Gromov hyperbolicity,
most notably by the \emph{$\delta$-slimness} of geodesic triangles (see, e.g., \cite[\S III.H.1]{BH}).
We remark that, by \cite[Theorem~4.1]{BS},
every $\delta$-hyperbolic metric space can be isometrically embedded
into a complete $\delta$-hyperbolic geodesic space.
Concerning the Gromov product in a $\delta$-hyperbolic geodesic space,
one can see that
\begin{equation}\label{eq:dxg}
d(x,\gamma) -2\delta \le (y|z)_x \le d(x,\gamma),
\end{equation}
where $d(x,\gamma) :=\min_{t \in [0,1]} d(x,\gamma(t))$,
holds for any $x,y,z \in X$ and geodesic $\gamma:[0,1] \lra X$ from $y$ to $z$
(note that the latter inequality always holds by the triangle inequality; see \cite[2.33]{Va}).

We close this subsection with two important fundamental lemmas
for later use in the proofs of Theorems~\ref{th:conv} and \ref{th:cont}, respectively
(see \cite[2.15, 2.19]{Va}).

\begin{lemma}[Tripod lemma]\label{lm:tripod}
Let $\gamma,\eta:[0,1] \lra X$ be geodesics emanating from the same point $x$
and put $y=\gamma(1)$, $z=\eta(1)$.
Then, for any $y'$ on $\gamma$ and $z'$ on $\eta$ with $d(x,y')=d(x,z') \le (y|z)_x$, we have
\[ d(y',z') \le 4\delta. \]
\end{lemma}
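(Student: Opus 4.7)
The plan is to reduce the estimate to the four-point inequality \eqref{eq:d-hyp} for $\delta$-hyperbolicity, applied with base point $x$ and chained through the interior points of the two sides of the tripod $\{x,y,z\}$. Writing $t:=d(x,y')=d(x,z')$, in a tree the tripod picture would force $y'=z'$ exactly, and the task is to control how this collapse fails in the $\delta$-hyperbolic setting.

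First I would record two trivial Gromov products coming from the geodesic parametrizations. Since $y'$ lies on the geodesic $\gamma$ from $x$ to $y$, the identity $d(y,y')=d(x,y)-d(x,y')$ gives $(y|y')_x=t$, and symmetrically $(z|z')_x=t$. The hypothesis supplies $(y|z)_x\ge t$.

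Next I would chain \eqref{eq:d-hyp} twice with $p=x$. A first application with the three auxiliary points $y',y,z$ yields
\[
(y'|z)_x \;\ge\; (y'|y)_x \wedge (y|z)_x -\delta \;\ge\; t-\delta,
\]
and a second application with $y',z,z'$ then gives
\[
(y'|z')_x \;\ge\; (y'|z)_x \wedge (z|z')_x -\delta \;\ge\; t-2\delta.
\]
Unfolding the definition of the Gromov product on the left and using $d(x,y')=d(x,z')=t$ immediately converts this lower bound into the desired inequality $d(y',z')\le 4\delta$.

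The argument is essentially mechanical and I do not expect a real obstacle; the only point requiring care is matching the roles of the letters in \eqref{eq:d-hyp} across the two applications so that the two hyperbolicity deficits add cleanly to $2\delta$, with the factor $2$ from the definition of the Gromov product then producing the final constant $4\delta$.
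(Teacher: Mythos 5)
Your argument is correct and is essentially the paper's own proof: two chained applications of the four-point inequality \eqref{eq:d-hyp} based at $x$, combined with the identities $(y|y')_x=d(x,y')$ and $(z|z')_x=d(x,z')$ for points on the geodesics, yielding $(y'|z')_x\ge t-2\delta$ and hence $d(y',z')\le 4\delta$. The only cosmetic difference is the order of the intermediate Gromov products (the paper bounds $(y'|z')_x$ via $(y|z')_x$ rather than via $(y'|z)_x$), which changes nothing.
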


\if0
\begin{proof}
Iterating \eqref{eq:d-hyp}, we have
\begin{align*}
(y'|z')_x
&\ge (y'|y)_x \wedge (y|z')_x -\delta \\
&\ge  (y'|y \big)_x \wedge (y|z)_x \wedge (z|z')_x -2\delta \\
&= d(x,y') \wedge (y|z)_x \wedge d(x,z') -2\delta.
\end{align*}
Then, by the hypothesis $d(x,y')=d(x,z') \le (y|z)_x$, we obtain the claim.
\end{proof}
\fi

\begin{lemma}\label{lm:trian}
Let $\gamma_i$ be a geodesic from $p$ to $x_i$, $i=1,2$.
Then, for $y_i$ on $\gamma_i$ such that $d(p,y_1) \wedge d(p,y_2) \ge (x_1|x_2)_p -\sigma$
with $\sigma \ge 0$, we have
\[ |(x_1|x_2)_p -(y_1|y_2)_p| \le 6\delta +\sigma. \]
\end{lemma}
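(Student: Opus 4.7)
The plan is to prove the lemma by iterating the defining four-point condition \eqref{eq:d-hyp} rather than invoking the tripod lemma directly, which gives a clean algebraic argument.

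The key preliminary observation is that, since $y_i$ lies on a geodesic $\gamma_i$ from $p$ to $x_i$, we have $d(p,x_i)=d(p,y_i)+d(y_i,x_i)$, which gives the exact identity $(y_i|x_i)_p=d(p,y_i)$ for $i=1,2$. After this, everything reduces to formal manipulation of Gromov products.

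For the lower bound, I would apply \eqref{eq:d-hyp} at the base point $p$ twice: first to the triple $(y_1,x_1,y_2)$ to bound $(y_1|y_2)_p$ from below by $(y_1|x_1)_p \wedge (x_1|y_2)_p -\delta$, and then to the triple $(x_1,x_2,y_2)$ to bound $(x_1|y_2)_p$ from below by $(x_1|x_2)_p \wedge (x_2|y_2)_p -\delta$. Chaining these and inserting the geodesic identity gives
\[ (y_1|y_2)_p \ge d(p,y_1) \wedge (x_1|x_2)_p \wedge d(p,y_2) -2\delta. \]
Under the hypothesis $d(p,y_1)\wedge d(p,y_2)\ge (x_1|x_2)_p-\sigma$ together with $\sigma\ge 0$, each of the three quantities in the minimum is at least $(x_1|x_2)_p-\sigma$, so $(y_1|y_2)_p \ge (x_1|x_2)_p -\sigma -2\delta$.

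For the upper bound, the symmetric chain obtained by exchanging the roles of $x_i$ and $y_i$ yields
\[ (x_1|x_2)_p \ge d(p,y_1) \wedge (y_1|y_2)_p \wedge d(p,y_2) -2\delta, \]
and the universal bound $(y_1|y_2)_p \le d(p,y_1)\wedge d(p,y_2)$ from \eqref{eq:Gpro-1} collapses the minimum to $(y_1|y_2)_p$, hence $(y_1|y_2)_p \le (x_1|x_2)_p +2\delta$. Combining both directions gives $|(x_1|x_2)_p-(y_1|y_2)_p|\le \sigma+2\delta$, which is in fact strictly stronger than the stated bound $\sigma+6\delta$.

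I do not foresee any real obstacle; the proof is entirely formal and uses only the iterated four-point inequality and the geodesic identity $(y_i|x_i)_p=d(p,y_i)$. The looser constant $6\delta$ in the statement presumably reflects an alternative geometric route (applying Lemma~\ref{lm:tripod} to auxiliary points on $\gamma_1,\gamma_2$ at a common height near $(x_1|x_2)_p$), in which several factors of $4\delta$ naturally accumulate.
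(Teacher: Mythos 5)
Your proof is correct, and for the lower bound it coincides with the paper's argument: two applications of \eqref{eq:d-hyp} at the base point $p$, combined with the identity $(x_i|y_i)_p=d(p,y_i)$ coming from $y_i\in\gamma_i$, give $(y_1|y_2)_p\ge d(p,y_1)\wedge(x_1|x_2)_p\wedge d(p,y_2)-2\delta\ge(x_1|x_2)_p-\sigma-2\delta$. Where you genuinely depart from the paper is the upper bound. The paper starts from the same symmetric chain $(x_1|x_2)_p\ge d(p,y_1)\wedge(y_1|y_2)_p\wedge d(p,y_2)-2\delta$, but then splits into cases according to whether $d(p,y_1)$ exceeds $(x_1|x_2)_p+2\delta$; in the remaining case it introduces auxiliary points $z_i$ on $\gamma_i$ at height $(x_1|x_2)_p$, applies Lemma~\ref{lm:tripod} to get $d(z_1,z_2)\le 4\delta$, and estimates $(y_1|y_2)_p\le(z_2|y_2)_p+d(y_1,z_1)+d(z_1,z_2)$, which is where the constant degrades to $6\delta+\sigma$. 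You instead observe that \eqref{eq:Gpro-1} gives $(y_1|y_2)_p\le d(p,y_1)\wedge d(p,y_2)$ unconditionally, so the minimum in the symmetric chain is always attained by the middle term and the case split is unnecessary; this yields $(y_1|y_2)_p\le(x_1|x_2)_p+2\delta$ with no hypothesis on the heights of the $y_i$ at all. The net result $|(x_1|x_2)_p-(y_1|y_2)_p|\le 2\delta+\sigma$ is sharper than the stated $6\delta+\sigma$ and of course implies it. The one ingredient from the geodesic assumption that your upper bound still needs (and uses) is $(x_i|y_i)_p=d(p,y_i)$, which guarantees that the outer two terms of the minimum dominate the middle one.
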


In view of \eqref{eq:dxg}, the latter lemma means that
the distance from $p$ to a geodesic between $x_1$ and $x_2$
is almost the same as the distance from $p$ to a geodesic between $y_1$ and $y_2$.

\if0
\begin{proof}
On the one hand, it immediately follows from the hypothesis that
\begin{align*}
(y_1|y_2)_p
&\ge (y_1|x_1)_p \wedge (x_1|x_2)_p \wedge (x_2|y_2)_p -2\delta \\
&= d(p,y_1) \wedge (x_1|x_2)_p \wedge d(p,y_2) -2\delta \\
&\ge (x_1|x_2)_p -\sigma -2\delta.
\end{align*}
On the other hand, in order to see an inequality in the reverse direction,
we similarly observe
\begin{equation}\label{eq:trian}
(x_1|x_2)_p
 \ge (x_1|y_1)_p \wedge (y_1|y_2)_p \wedge (y_2|x_2)_p -2\delta
 = d(p,y_1) \wedge (y_1|y_2)_p \wedge d(p,y_2) -2\delta.
\end{equation}
Suppose $d(p,y_1) \le d(p,y_2)$ without loss of generality.
First, if $d(p,y_1) >(x_1|x_2)_p +2\delta$, then in \eqref{eq:trian}
the right-hand side necessarily coincides with $(y_1|y_2)_p -2\delta$,
thereby we obtain $(y_1|y_2)_p \le (x_1|x_2)_p +2\delta$.
Next, in the case of $d(p,y_1) \le (x_1|x_2)_p +2\delta$,
we take $z_i$ on $\gamma_i$ with $d(p,z_1)=d(p,z_2)=(x_1|x_2)_p$.
Then we deduce from Lemma~\ref{lm:tripod} that $d(z_1,z_2) \le 4\delta$, and
\[ d(y_1,z_1) =|d(p,y_1)-d(p,z_1)|
 =|d(p,y_1)-(x_1|x_2)_p| \le 2\delta +\sigma. \]
Combining these with \eqref{eq:Gpro-2} and \eqref{eq:Gpro-1}, we have
\[ (y_1|y_2)_p \le (z_2|y_2)_p +d(y_1,z_1) +d(z_1,z_2)
 \le d(p,z_2) +6\delta +\sigma
 =(x_1|x_2)_p +6\delta +\sigma. \]
This completes the proof.
\end{proof}

\begin{figure}
\centering
\begin{picture}(400,150)

\qbezier(200,15)(200,90)(185,110)
\qbezier(185,110)(165,140)(100,140)
\qbezier(200,15)(200,90)(215,110)
\qbezier(215,110)(235,140)(300,140)

\put(197,4){$p$}
\put(87,138){$x_1$}
\put(305,138){$x_2$}
\put(160,116){$y_1$}
\put(246,122){$y_2$}
\put(177,90){$z_1$}
\put(213,90){$z_2$}

\put(137,144){$\gamma_1$}
\put(253,144){$\gamma_2$}

\put(247,132){\rule{2pt}{2pt}}
\put(165,126){\rule{2pt}{2pt}}
\put(192,90){\rule{2pt}{2pt}}
\put(206,90){\rule{2pt}{2pt}}

\end{picture}
\caption{Lemma~\ref{lm:trian}}\label{fig:trian}
\end{figure}
\fi

\subsection{Proximal point algorithm}\label{ssc:PPA}

Given a function $f:X \lra \R$ on a metric space $(X,d)$,
optimization theory is concerned with how to find a minimizer (or the minimum value) of $f$.
It is well studied for $\CAT(0)$-spaces by means of the \emph{proximal point algorithm};
we refer to the books \cite{AGSbook,Babook} for further reading.
For $x \in X$ and $\tau>0$,
recall that the \emph{proximal}  (or \emph{resolvent}) \emph{operator} is defined by
\[ \sJ^f_{\tau}(x) :=\argmin_{y \in X} \bigg\{ f(y) +\frac{d^2(x,y)}{2\tau} \bigg\}. \]
Roughly speaking, an element in $\sJ^f_{\tau}(x)$ can be regarded as
an approximation of a point on the gradient curve of $f$ at time $\tau$ from $x$.

As a fundamental example,
let us consider a convex function on a $0$-hyperbolic geodesic space.
We say that $f$ is (weakly, geodesically) \emph{$K$-convex} for $K \in \R$ if, for any $x,y \in X$
and some geodesic $\gamma:[0,1] \lra X$ from $x$ to $y$,
\begin{equation}\label{eq:Kconv}
f\big( \gamma(t) \big) \le (1-t)f(x) +tf(y) -\frac{K}{2}(1-t)td^2(x,y)
\end{equation}
holds for all $t \in [0,1]$.
As usual, by a convex function we mean a $0$-convex function.

Let $(X,d)$ be a $0$-hyperbolic geodesic space
and $f$ be a convex function on $X$ such that $\inf_X f$ is attained at $p \in X$.
By the $0$-hyperbolicity, any four points in $X$ are isometrically embedded into a tree and,
in particular, any two points are connected by a unique geodesic
(see, e.g., \cite[\S 3.3]{DSU}, \cite[\S 6.2]{Ro}).
Given $x \in X$ and $\tau>0$,
we take $y \in \sJ^f_{\tau}(x)$ and assume $f(y)>f(p)$.
Then, on the geodesic $\gamma:[0,1] \lra X$ from $x$ to $y$,
we find from the choice of $y$ that $f(y)<f(\gamma(t))$ holds for all $t \in [0,1)$.
Let $\gamma(\bar{t})$ be the closest point to $p$ on $\gamma$.
Then the concatenation of the geodesic $\eta$ from $p$ to $\gamma(\bar{t})$
and $\gamma|_{[\bar{t},1]}$ is again a geodesic, along which $f$ is convex.
Since $f(p)<f(y)<f(\gamma(t))$ for all $t \in [0,1)$,
$\bar{t}=1$ necessarily holds and we find that $y$ lies in the geodesic from $x$ to $p$.
Therefore, the proximal point algorithm goes straight towards the closest minimizer of $f$
(see Figure~\ref{fig:PPA}).

\begin{figure}
\centering
\begin{picture}(400,150)

\put(50,40){\line(1,0){60}}
\put(75,5){\line(1,1){75}}
\put(150,80){\line(-1,1){55}}
\put(150,80){\line(1,0){110}}
\put(260,80){\line(1,2){30}}
\put(260,80){\line(2,1){80}}
\put(260,80){\line(1,-1){40}}
\put(300,40){\line(4,1){40}}
\put(300,40){\line(4,-1){40}}

\put(130,47){$p$}
\put(317,117){$x$}
\put(200,67){$y \in \sJ^f_{\tau}(x)$}
\put(288,84){$\gamma$}

\put(128,58){\rule{2pt}{2pt}}
\put(319,109){\rule{2pt}{2pt}}
\put(202,79){\rule{2pt}{2pt}}

\thicklines
\put(203,80){\line(1,0){57}}
\put(260,80){\line(2,1){59}}

\end{picture}
\caption{Proximal operator in $0$-hyperbolic spaces}\label{fig:PPA}
\end{figure}
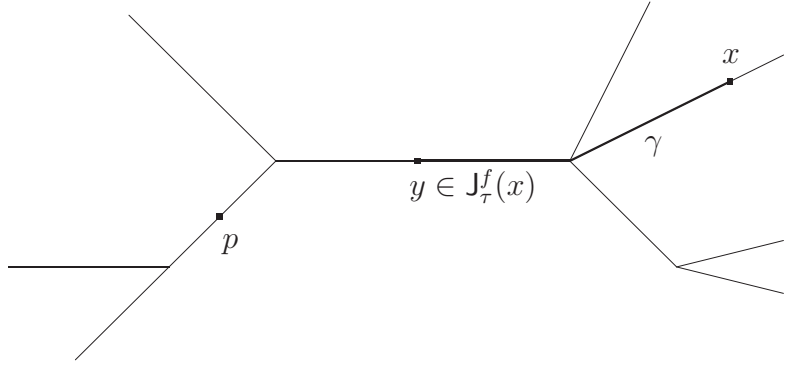

The above argument is essentially indebted to the special property that
any (simple, constant speed) curve is a geodesic, however,
provides a rough picture of our strategy for general Gromov hyperbolic spaces
in the next section.

\section{Proofs of main results}\label{sc:proof}

In this section, let $(X,d)$ be a proper $\delta$-hyperbolic geodesic space,
and $f:X \lra \R$ be a $K$-convex $L$-Lipschitz function with $K \ge 0$ and $L>0$.
Recall that $(X,d)$ is \emph{proper} if every bounded closed set is compact,
and $f$ is \emph{$L$-Lipschitz} if $|f(x)-f(y)| \le Ld(x,y)$ for all $x,y \in X$.
We also assume that $\inf_X f >-\infty$ and the infimum is attained at some point $p \in X$.
This is indeed the case if $K>0$ by a standard argument as follows
(see, e.g., \cite[Lemma~2.4.8]{AGSbook}).

\begin{lemma}\label{lm:K>0}
Let $(X,d)$ be a complete geodesic space
and $f$ be a lower semi-continuous $K$-convex function with $K>0$.
If $f$ is bounded below on some nonempty open set,
then $\inf_X f >-\infty$ and the infimum is attained at a unique point.
\end{lemma}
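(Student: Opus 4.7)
The plan is to dispatch the three assertions (boundedness from below, existence of a minimizer, and uniqueness) in sequence, exploiting the quadratic gain that $K$-convexity with $K>0$ provides.

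First I would establish coercivity. Pick any point $x_0\in X$ and constants $r,M>0$ with $f\ge -M$ on the open ball $B(x_0,r)$. For an arbitrary $x\in X$ with $d(x,x_0)\ge r$, let $\gamma:[0,1]\to X$ be a geodesic with $\gamma(0)=x$, $\gamma(1)=x_0$, and apply \eqref{eq:Kconv} at $t=1-r/(2d(x,x_0))$, which places $\gamma(t)$ inside $B(x_0,r)$ and hence forces $f(\gamma(t))\ge -M$. Solving the resulting inequality for $f(x)$ and using $1-t=r/(2d(x,x_0))$, the $K$-convexity correction produces a term of order $\frac{K}{4}d^2(x,x_0)$ that dominates the linear-in-$d(x,x_0)$ contribution coming from $M$ and $f(x_0)$. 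This yields a quadratic lower bound for $f(x)$ in terms of $d(x,x_0)$; combined with the trivial bound $f\ge -M$ on $B(x_0,r)$, it gives $\inf_X f>-\infty$.

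Next I would produce a minimizer via a Cauchy-sequence argument. Let $\{x_n\}$ be a minimizing sequence, and for each pair $m,n$ let $m_{m,n}$ denote the midpoint of some geodesic from $x_m$ to $x_n$. Applying \eqref{eq:Kconv} with $t=1/2$ and using $f(m_{m,n})\ge\inf_X f$ gives
\[ \frac{K}{8}d^2(x_m,x_n) \le \frac{f(x_m)+f(x_n)}{2} - \inf_X f, \]
whose right-hand side tends to $0$ as $m,n\to\infty$. Hence $\{x_n\}$ is Cauchy, converges to some $p\in X$ by completeness, and lower semicontinuity forces $f(p)\le \liminf_n f(x_n) = \inf_X f$, so $p$ is a minimizer.

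Finally, uniqueness falls out of the same midpoint inequality applied to two putative minimizers $p\ne p'$: the midpoint would then satisfy $f\le \inf_X f-\frac{K}{8}d^2(p,p')<\inf_X f$, a contradiction. The only step requiring genuine bookkeeping is the coercivity estimate, where one must carefully track the linear and quadratic dependence on $d(x,x_0)$ (and the possibly negative value of $f(x_0)$) to ensure that the quadratic term really dominates; once that is in hand, the remaining arguments are direct applications of the midpoint form of \eqref{eq:Kconv}.
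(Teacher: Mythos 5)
Your proof is correct and follows essentially the same route as the paper's: a coercivity bound obtained by evaluating the $K$-convexity inequality at a point of a geodesic lying in the region where $f$ is controlled (so that the $\frac{K}{2}(1-t)t\,d^2$ term dominates the linear contribution), followed by the midpoint inequality $\frac{K}{8}d^2(x_m,x_n)\le\frac{1}{2}(f(x_m)+f(x_n))-\inf_X f$ to show that any minimizing sequence is Cauchy, which gives existence and uniqueness at once. The only difference is cosmetic: the paper's argument bounds $f$ from below on the sphere $\{y : d(x_0,y)=r\}$ via properness and continuity, whereas you use the open-set hypothesis directly at the point $\gamma(t)$ with $d(\gamma(t),x_0)=r/2$, which hews more closely to the lemma's stated hypotheses (complete, lower semi-continuous).
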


\if0
\begin{proof}
We first show that $f$ is bounded below.
Fix $x_0 \in X$ and take a sequence $(x_i)_{i \ge 1}$ in $X$ such that $\lim_{i \to \infty}f(x_i) =\inf_X f$.
If a subsequence of $(x_i)_{i \ge 1}$ converges to $x_0$, then we have $f(x_0)=\inf_X f$.
Thus, suppose that $d(x_0,x_i)>r$ holds for some $r>0$ and all $i \ge 1$.
Since $X$ is proper and $f$ is continuous,
\[ \beta :=\inf\{ f(y) \,|\, d(x_0,y)=r \} >-\infty. \]
For each $i \ge 1$ and a geodesic $\gamma:[0,1] \lra X$ from $x_0$ to $x_i$,
we find from the $K$-convexity that
\begin{align*}
f\bigg( \gamma\bigg( \frac{r}{d(x_0,x_i)} \bigg) \bigg)
&\le \bigg( 1-\frac{r}{d(x_0,x_i)} \bigg) f(x_0) +\frac{r}{d(x_0,x_i)} f(x_i) \\
&\quad -\frac{K}{2} \bigg( 1-\frac{r}{d(x_0,x_i)} \bigg) \frac{r}{d(x_0,x_i)} d^2(x_0,x_i).
\end{align*}
This implies
\begin{align*}
f(x_i) &\ge -\frac{d(x_0,x_i)}{r} \big( |\beta| +|f(x_0)| \big)
 +\frac{K}{2} \bigg( 1-\frac{r}{d(x_0,x_i)} \bigg) d^2(x_0,x_i) \\
&\ge -\frac{1}{2K} \bigg( \frac{|\beta| +|f(x_0)|}{r} +\frac{Kr}{2} \bigg)^2.
\end{align*}
Hence $f$ is bounded below and we put $\omega:=\inf_X f$.

Now, for any $i,j \ge 1$ and a geodesic $\eta:[0,1] \lra X$ from $x_i$ to $x_j$,
we have
\[ f\bigg( \eta\bigg( \frac{1}{2} \bigg) \bigg) \le \frac{1}{2}f(x_i) +\frac{1}{2}f(x_j)
 -\frac{K}{8}d^2(x_i,x_j), \]
and hence
\[ d^2(x_i,x_j) \le \frac{4}{K} \big\{ f(x_i) +f(x_j) -2\omega \big\} \to 0 \]
as $i,j \to \infty$.
Therefore $x_i$ converges to some point $x_{\infty} \in X$, which is a minimizer of $f$.
The uniqueness also follows from this discussion,
since an arbitrary minimizing sequence is convergent.
\end{proof}
\fi

In the case of $K>0$, we also have the following a priori estimates in terms of $K$ and $L$.

\begin{remark}[A priori estimates]\label{rm:apri}
For any $x \in X$, we find
\[ f(p) +\frac{K}{2}d^2(p,x) \le f(x) \le f(p) +Ld(p,x), \]
where the first inequality follows from the $K$-convexity along a geodesic between $p$ and $x$.
Hence, we always have
\[ d(p,x) \le \frac{2L}{K}, \qquad f(x)-f(p) \le \frac{2L^2}{K}. \]
In particular, $\diam(X) \le 4L/K$.
\end{remark}

\subsection{Proof of Theorem~\ref{th:conv}}\label{ssc:thm1}

We first prove Theorem~\ref{th:conv}.
The following proposition shows the first assertion \eqref{eq:conv}.
We remark that, in the current setting,
we have $\sJ^f_{\tau}(x) \neq \emptyset$ for any $x \in X$ and $\tau>0$.
In fact, the properness can be replaced with a weaker assumption that
every bounded closed set in each sublevel set $\{ y \in X \,|\, f(y) \le c \}$ is compact
(see \cite[Corollary~2.2.2, Lemma~2.4.8]{AGSbook}).

\begin{proposition}\label{pr:dpy}
Let $f:X \lra \R$ be $K$-convex and $L$-Lipschitz with $K \ge 0$ and $L>0$.
Then, for any $x \in X$, $\tau>0$, and $y \in \sJ^f_{\tau}(x)$, we have
\begin{equation}\label{eq:dpy}
d(p,y) \le d(p,x) -d(x,y) +\frac{4\sqrt{2\tau L\delta}}{\sqrt{K\tau +1}},
\end{equation}
where $p \in X$ is a minimizer of $f$.
\end{proposition}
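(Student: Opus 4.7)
The plan is to bound the nonnegative quantity $s := d(x,y) - (p|y)_x$. Because $d(p,y) = d(p,x) - d(x,y) + 2s$ by the definition of the Gromov product, the target inequality is equivalent to $s \le \frac{2\sqrt{2\tau L\delta}}{\sqrt{K\tau+1}}$. The strategy is to produce a point $w$ on a geodesic from $x$ to $p$ that is close to $y$ in the $\delta$-hyperbolic sense, and exploit the minimality of $y$ for $F(z) := f(z) + d^2(x,z)/(2\tau)$.

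First I would fix a geodesic $\gamma:[0,D]\to X$ (arc-length parametrized, $D=d(x,p)$) from $x$ to $p$ along which $f$ is $K$-convex, and set $w := \gamma((p|y)_x)$. Applying the tripod lemma (Lemma~\ref{lm:tripod}) to $\gamma$ and any geodesic from $x$ to $y$ at the common parameter $(p|y)_x$, and noting that the point on the $x$-to-$y$ geodesic at that distance from $x$ lies within distance $s$ of $y$, I would conclude $d(w,y) \le 4\delta + s$.

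Next I would unpack the minimality $F(y) \le F(w)$. Writing $a := d(x,y)$ and using $d(x,w) = (p|y)_x = a-s$, this rearranges to
\[ \frac{s(2a-s)}{2\tau} \le f(w) - f(y). \]
I would then bound $f(w)-f(y)$ from above in two complementary ways: (i) by $L$-Lipschitz continuity, obtaining $f(w) - f(y) \le L\,d(w,y) \le L(4\delta+s)$; (ii) by $K$-convexity of $f$ along $\gamma$ together with the sharper estimate $f(y) \ge f(p) + K\,d(p,y)^2/2$ (a standard consequence of $p$ being a minimizer of the $K$-convex function $f$). Combining (i) with the minimality inequality alone already gives $s(2a - s - 2\tau L) \le 8\tau L\delta$, which matches the desired bound in the regime $K=0$.

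Finally, to recover the $\sqrt{K\tau+1}$ in the denominator, I would exploit the $(K + 1/\tau)$-strong convexity of $F$ along $\gamma$, which arises from the $K$-convexity of $f$ and the $1/\tau$-convexity of $u \mapsto u^2/(2\tau)$ in the arc-length parameter. Combined with (ii) and the identity $d(p,y) = D - a + 2s$, the algebra should organize into $(K\tau+1)\,s^2 \le 8\tau L\delta$ plus nonpositive corrections, whence $s \le \frac{2\sqrt{2\tau L\delta}}{\sqrt{K\tau+1}}$.

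The hard part will be this last combination: carrying out the algebraic manipulation cleanly so that the coefficient $K\tau+1$ appears on the left. In particular, one needs either to use the strong convexity of $F$ along $\gamma$ to locate the minimizer on $\gamma$ relative to the parameter $a-s$, or to isolate the $K\tau\,s^2$ contribution from $f(y) \ge f(p) + K\,d(p,y)^2/2$ together with $d(p,y) = D-a+2s$, while simultaneously retaining the $8\tau L\delta$ term coming from the Lipschitz/tripod bound on $d(w,y)$.
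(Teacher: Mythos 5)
Your reduction is the right one: with $s:=d(x,y)-(p|y)_x=(x|p)_y$ one has $d(p,y)=d(p,x)-d(x,y)+2s$, so the claim is exactly $s\le 2\sqrt{2\tau L\delta}/\sqrt{K\tau+1}$, and this is also how the paper reads the statement. The gap is in how you bound $f(w)-f(y)$. Your comparison point $w$ sits on the geodesic from $x$ to $p$, and the only link you have between $w$ and $y$ is the metric bound $d(w,y)\le 4\delta+s$; feeding this into the Lipschitz estimate costs you $L(4\delta+s)$, and the resulting inequality $s\bigl(2a-s-2\tau L\bigr)\le 8\tau L\delta$ (with $a=d(x,y)$) does \emph{not} imply $s^2\le 8\tau L\delta$. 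Indeed $a\le 2\tau L$ always holds (compare $F(y)\le F(x)$ with the Lipschitz bound), and whenever $a\le\tau L$ --- which happens generically, e.g.\ on $\R$ with $f=L|\cdot|$ one gets $a=\min\{\tau L,d(p,x)\}$ --- the factor $2a-s-2\tau L$ is $\le -s\le 0$ and the inequality is vacuous. So the claim that (i) ``matches the desired bound in the regime $K=0$'' is false; the term $Ls$ coming from bridging the distance $s$ by Lipschitz continuity is fatal. Your route (ii) cannot repair this on its own either, since it contains no $\delta$ at all and hence cannot produce a bound of order $\sqrt{\delta}$; the proposed combination of (i) and (ii) is never actually carried out.

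The paper avoids this by placing the comparison point on the geodesic $\eta$ from $y$ to $p$, at distance $(x|p)_y=s$ from $y$, and applying the tripod lemma \emph{at the vertex $y$} to the geodesics $y\to x$ and $y\to p$. Lipschitz continuity is then used only across the $4\delta$ tripod gap (cost $4L\delta$), while the distance $s$ is traversed along $\eta$, where the $K$-convexity of $f$ and $f(p)=\inf_X f$ give $f(\eta(\bar s))\le f(y)+\bar s(f(p)-f(y))-\tfrac{K}{2}(1-\bar s)\bar s\,d^2(p,y)\le f(y)$, i.e.\ a \emph{nonpositive} contribution instead of $+Ls$. This keeps the left-hand side as $s(2a-s)/(2\tau)\le 4L\delta+(\text{nonpositive})$, and since $s=(x|p)_y\le d(x,y)=a$ one gets $s^2\le s(2a-s)\le 8\tau L\delta$ for $K=0$; the term $-\tfrac{K}{2}\bar s^2 d^2(p,y)=-\tfrac{K}{2}s^2$ carried to the left is precisely what upgrades the coefficient of $s^2$ to $K\tau+1$ (the paper organizes this as a quadratic inequality in $\bar t=s/a$ and argues via its discriminant). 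If you redirect your comparison point from the $x$--$p$ geodesic to the $y$--$p$ geodesic and use convexity rather than Lipschitz continuity along it, your argument becomes the paper's proof.
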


The assertion \eqref{eq:dpy} can be rewritten as
\[ (x|p)_y \le \frac{2\sqrt{2\tau L\delta}}{\sqrt{K\tau +1}}. \]
In particular, if $\delta=0$, then $(x|p)_y=0$ holds and $y$ lies in a geodesic from $x$ to $p$
(recall \eqref{eq:dxg} and the discussion in Subsection~\ref{ssc:PPA}).

\begin{proof}
Assume $y \neq x$ without loss of generality.
On the one hand, for any geodesic $\gamma:[0,1] \lra X$ from $y$ to $x$,
we deduce from the choice of $y$ that
\[ f(y) +\frac{d^2(x,y)}{2\tau}
 \le f\big( \gamma(t) \big) +\frac{(1-t)^2 d^2(x,y)}{2\tau} \]
for all $t \in (0,1)$.
On the other hand, for some geodesic $\eta:[0,1] \lra X$ from $y$ to $p$,
the $K$-convexity implies
\[ f\big( \eta(s) \big) \le (1-s)f(y) +sf(p) -\frac{K}{2}(1-s)s d^2(p,y). \]
We set
\[ \bar{t} := \frac{(x|p)_y}{d(x,y)} \in [0,1], \qquad
 \bar{s} := \frac{\bar{t}d(x,y)}{d(p,y)} =\frac{(x|p)_y}{d(p,y)} \in [0,1]. \]
Then we have $d(y,\gamma(\bar{t}))=d(y,\eta(\bar{s}))=(x|p)_y$
and it follows from Lemma~\ref{lm:tripod} that
\[ d\big( \gamma(\bar{t}),\eta(\bar{s}) \big) \le 4\delta. \]
Hence, we find, since $f$ is $L$-Lipschitz,
\begin{align*}
(2\bar{t} -\bar{t}^2)\frac{d^2(x,y)}{2\tau}
&\le f\big( \gamma(\bar{t}) \big) -f(y)  \\
&\le f\big( \eta(\bar{s}) \big) -f(y) +4L\delta \\
&\le \bar{s} \big( f(p)-f(y) \big) -\frac{K}{2}(1-\bar{s})\bar{s} d^2(p,y) +4L\delta \\
&= \frac{\bar{t}d(x,y)}{d(p,y)}\big( f(p)-f(y) \big)
 -\frac{K}{2} \big( d(p,y) -\bar{t}d(x,y) \big) \bar{t} d(x,y) +4L\delta.
\end{align*}
Rearranging and multiplying the both sides with $2\tau/d^2(x,y)$ implies
\begin{equation}\label{eq:tbar}
(K\tau +1) \bar{t}^2
 -\bigg( \frac{2\tau}{d(x,y)} \frac{f(y)-f(p)}{d(p,y)} +K\tau \frac{d(p,y)}{d(x,y)} +2 \bigg) \bar{t}
 +\frac{8\tau L\delta}{d^2(x,y)} \ge 0.
\end{equation}

We regard the left hand side of \eqref{eq:tbar} as a quadratic polynomial of $\bar{t}$.
First, if the discriminant
\[ \Delta :=\bigg( \frac{\tau}{d(x,y)} \frac{f(y)-f(p)}{d(p,y)} +\frac{K\tau}{2} \frac{d(p,y)}{d(x,y)} +1 \bigg)^2
 -(K\tau +1) \frac{8\tau L\delta}{d^2(x,y)} \]
is negative, then we have
\[ K\tau d(p,y) +2d(x,y) < 4\sqrt{K\tau +1} \sqrt{2\tau L\delta} \]
since $f(y) \ge f(p)$.
Combining this with the triangle inequality and $d(x,y) \le d(p,x)$ from the choice of $y$ (and $f(y) \ge f(p)$),
we find
\begin{align*}
(K\tau +1)d(p,y)
&< 4\sqrt{K\tau +1} \sqrt{2\tau L\delta} -2d(x,y) +d(p,x)+d(x,y) \\
&\le 4\sqrt{K\tau +1} \sqrt{2\tau L\delta} +(K\tau +1) \big( d(p,x)-d(x,y) \big).
\end{align*}
This shows the claimed inequality \eqref{eq:dpy}.

Next, suppose $\Delta \ge 0$.
Observe that $\bar{t}$ lies left of the vertex of the polynomial, namely
\[ \bar{t} =\frac{(x|p)_y}{d(x,y)}
 \le \frac{1}{K\tau +1}
 \bigg( \frac{\tau}{d(x,y)} \frac{f(y)-f(p)}{d(p,y)} +\frac{K\tau}{2} \frac{d(p,y)}{d(x,y)} +1 \bigg) \]
holds, since
\begin{align*}
&2(K\tau +1)(x|p)_y -\bigg( 2\tau \frac{f(y)-f(p)}{d(p,y)} +K\tau d(p,y) +2d(x,y) \bigg) \\
&\le (K\tau -1)d(x,y) +d(p,y) -(K\tau +1)d(p,x) \\
&\le -d(x,y) +d(p,y) -d(p,x) \le 0.
\end{align*}
Thus, we obtain from \eqref{eq:tbar} that
\begin{align*}
(K\tau +1)\bar{t}
&\le \frac{\tau}{d(x,y)} \frac{f(y)-f(p)}{d(p,y)} +\frac{K\tau}{2} \frac{d(p,y)}{d(x,y)} +1 -\sqrt{\Delta} \\
&\le \sqrt{\bigg( \frac{\tau}{d(x,y)} \frac{f(y)-f(p)}{d(p,y)} +\frac{K\tau}{2} \frac{d(p,y)}{d(x,y)} +1 \bigg)^2
 -\Delta} \\
&= \sqrt{K\tau +1} \frac{2\sqrt{2\tau L\delta}}{d(x,y)}.
\end{align*}
Substituting $\bar{t}=(x|p)_y/d(x,y)$ yields \eqref{eq:dpy} and completes the proof.
\end{proof}

In the case of $K>0$ and $\tau >K^{-1}$,
we can estimate $d(x,y)$ in \eqref{eq:dpy} from below in terms of $K$ and $L$
(regardless of $\delta$) as follows.

\begin{lemma}\label{lm:dxy}
Let $f:X \lra \R$ be $K$-convex and $L$-Lipschitz with $K,L>0$.
Then we have, for any $x \in X$, $\tau>K^{-1}$, and $y \in \sJ^f_{\tau}(x)$,
\begin{equation}\label{eq:dxy}
d(x,y) \ge \bigg( 1-\frac{1}{K\tau} \bigg) \frac{f(x)-f(p)}{L}.
\end{equation}
\end{lemma}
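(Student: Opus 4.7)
The plan is to combine the defining optimality of $y$ with the $L$-Lipschitz continuity of $f$ and the quadratic a priori bound recorded in Remark~\ref{rm:apri}; no Gromov hyperbolicity enters this particular estimate. The key idea is that the natural competitor in the defining minimization for $y$ should be the minimizer $p$ itself (rather than $x$, which would only yield the usual dissipation inequality).

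First I would test $y \in \sJ^f_{\tau}(x)$ against $z=p$, obtaining
\[
 f(y) + \frac{d^2(x,y)}{2\tau} \le f(p) + \frac{d^2(x,p)}{2\tau},
\]
which rearranges to
\[
 f(y) - f(p) \le \frac{d^2(x,p) - d^2(x,y)}{2\tau}.
\]
Next I would split $f(x) - f(p) = \bigl(f(x) - f(y)\bigr) + \bigl(f(y) - f(p)\bigr)$, bound the first summand by $L\,d(x,y)$ using the $L$-Lipschitz property, and use the displayed inequality for the second summand. This yields
\[
 f(x) - f(p) \le L\,d(x,y) + \frac{d^2(x,p) - d^2(x,y)}{2\tau}.
\]

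Then I would invoke the a priori estimate of Remark~\ref{rm:apri}, namely $d^2(p,x) \le 2\bigl(f(x)-f(p)\bigr)/K$ (which comes from the $K$-convexity of $f$ along a geodesic from $p$ to $x$ combined with $f(p) = \inf_X f$). Substituting gives
\[
 f(x) - f(p) \le L\,d(x,y) + \frac{1}{K\tau}\bigl(f(x)-f(p)\bigr) - \frac{d^2(x,y)}{2\tau},
\]
and rearranging together with dropping the nonnegative $d^2(x,y)/(2\tau)$ on the right yields
\[
 \Bigl(1 - \frac{1}{K\tau}\Bigr)\bigl(f(x)-f(p)\bigr) \le L\,d(x,y).
\]
Since $K\tau > 1$ the prefactor is positive, and dividing by $L$ produces exactly \eqref{eq:dxy}.

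I do not expect a serious obstacle here, as the proof is a short combination of three ingredients already in place. The only delicate point in planning is the correct choice of competitor: testing against $z=p$ is what forces $f(x)-f(p)$ to appear, and then the a priori bound $d^2(p,x) \le 2(f(x)-f(p))/K$ is precisely what is needed to absorb the unwanted $d^2(x,p)$ term and extract the constant $1 - 1/(K\tau)$.
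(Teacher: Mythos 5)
Your proposal is correct and follows essentially the same route as the paper: test $y$ against the competitor $p$ in the defining minimization, bound $f(x)-f(y)$ by $L\,d(x,y)$ via the Lipschitz property, and absorb $d^2(p,x)$ using the $K$-convexity bound $d^2(p,x)\le 2(f(x)-f(p))/K$ from Remark~\ref{rm:apri}. The paper merely writes the same three ingredients in a single chain of inequalities (also discarding the harmless $d^2(x,y)$ term), so there is nothing to add.
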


\begin{proof}
On the one hand,
it follows from the choice of $y$ and the $L$-Lipschitz continuity that
\[ f(p) +\frac{d^2(p,x)}{2\tau} \ge f(y) +\frac{d^2(x,y)}{2\tau}
 \ge f(x) -Ld(x,y) +\frac{d^2(x,y)}{2\tau}. \]
On the other hand, the $K$-convexity implies (recall Remark~\ref{rm:apri})
\begin{equation}\label{eq:Kcon}
f(x) \ge f(p)+\frac{K}{2}d^2(p,x).
\end{equation}
Combining these furnishes
\begin{align*}
2\tau Ld(x,y) &\ge 2\tau Ld(x,y) -d^2(x,y) \ge 2\tau \big( f(x)-f(p) \big) -d^2(p,x) \\
&\ge \bigg( 2\tau-\frac{2}{K} \bigg) \big( f(x)-f(p) \big).
\end{align*}
\end{proof}

Now, plugging \eqref{eq:dxy} into \eqref{eq:dpy} completes
the proof of the second assertion \eqref{eq:conv-d}.

\begin{remark}\label{rm:conv}
In \eqref{eq:conv-d}, we have $d(p,y)<d(p,x)$ if
\[ f(x) > f(p) +\frac{4KL\tau \sqrt{2\tau L\delta}}{(K\tau -1)\sqrt{K\tau +1}}. \]
Note that this does not contradict the a priori bound $f(x)-f(p) \le 2L^2/K$
we mentioned in Remark~\ref{rm:apri}.
\end{remark}

\subsection{Proof of Corollary~\ref{cr:conv}}\label{ssc:cor}

Let us first observe \eqref{eq:conv-f}, \eqref{eq:fxN} and \eqref{eq:dpxN}.
Combining \eqref{eq:dxy} with the choice of $y$,
we obtain \eqref{eq:conv-f} as
\[ f(y) \le f(x) -\frac{d^2(x,y)}{2\tau}
 \le f(x) -\frac{(K\tau -1)^2(f(x)-f(p))^2}{2(KL)^2 \tau^3}. \]
When we recursively choose $x_i \in \sJ^f_{\tau}(x_{i-1})$
for an arbitrary initial point $x_0 \in X$ and
\[ f(x_i) >f(p) +\frac{KL\tau\sqrt{2\tau}}{K\tau -1} \ve \]
holds for all $0 \le i \le N-1$, \eqref{eq:conv-f} yields
\[ f(x_N) < f(x_0) -N\ve^2. \]
Since $f(p) \le f(x_N)$, we find that $N <(f(x_0)-f(p))\ve^{-2}$ necessarily holds.
Therefore, we have \eqref{eq:fxN} for some $N <(f(x_0)-f(p))\ve^{-2}$.
Moreover, \eqref{eq:dpxN} follows from \eqref{eq:fxN} and \eqref{eq:Kcon}.

Turning to Corollary~\ref{cr:conv}, if
\[ d^2(p,x_i) > \frac{2L\tau}{K\tau -1} \bigg( \frac{4\sqrt{2\tau L\delta}}{\sqrt{K\tau +1}} +\ve^2 \bigg) \]
for all $0 \le i \le N-1$, then we deduce from \eqref{eq:Kcon} and \eqref{eq:conv-d} that
\[ d(p,x_N) < d(p,x_0) -N\ve^2. \]
Therefore, we have \eqref{eq:dpxN+} for some $N <d(p,x_0)\ve^{-2}$.

\subsection{Proof of Theorem~\ref{th:cont}}\label{ssc:thm2}

We finally prove the contraction inequalities in Theorem~\ref{th:cont}.
The next lemma concerning convex functions on an interval is a well known fact.

\begin{lemma}\label{lm:1D}
Let $f:[0,\infty) \lra \R$ be a lower semi-continuous convex function
attaining its minimum at $0$.
Then, for any $\tau>0$ and $0<t_1<t_2$, we have
\[ 0 \le s_2-s_1 \le t_2-t_1, \]
where $s_i \in \sJ^f_{\tau}(t_i)$ for $i=1,2$.
\end{lemma}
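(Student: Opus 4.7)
The plan is to derive both inequalities simultaneously from the firm non-expansiveness of the proximal map, which in turn rests on the strong convexity of the penalized functional.

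The first step is to set $g_i(s) := f(s) + (s-t_i)^2/(2\tau)$ for $i=1,2$ and observe that each $g_i$ is $\tau^{-1}$-strongly convex on $[0,\infty)$, since it is the sum of the convex function $f$ and the $\tau^{-1}$-strongly convex quadratic $(s-t_i)^2/(2\tau)$. Since $s_i$ minimizes $g_i$ over the convex set $[0,\infty)$, I would then derive the two-point strong-convexity inequality
\[
g_i(s) \ge g_i(s_i) + \frac{1}{2\tau}(s-s_i)^2 \quad \text{for all } s \ge 0.
\]
This is obtained by restricting $g_i$ to the segment from $s_i$ to $s$, applying the $\tau^{-1}$-convexity inequality at an interior parameter $t \in (0,1)$, using $g_i((1-t)s_i + ts) \ge g_i(s_i)$ by minimality on the convex set, and letting $t \to 0$.

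The second step is to apply this inequality to the cross pairs $(i,s) = (1,s_2)$ and $(i,s) = (2,s_1)$ and add. The $f$-contributions cancel on both sides, and a direct expansion of
\[
\bigl[(s_2-t_1)^2 - (s_1-t_1)^2\bigr] + \bigl[(s_1-t_2)^2 - (s_2-t_2)^2\bigr]
\]
collapses to $2(s_2-s_1)(t_2-t_1)$. After dividing through by $2\tau$, I would obtain the firm non-expansiveness bound
\[
(s_2-s_1)(t_2-t_1) \ge (s_2-s_1)^2.
\]

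The final step is to read off both claims from this single estimate. Since $t_2-t_1 > 0$, a negative $s_2-s_1$ would make the left side negative while the right side is nonnegative, forcing $s_2 \ge s_1$. When $s_2 > s_1$, dividing by $s_2-s_1$ gives $t_2-t_1 \ge s_2-s_1$; when $s_2 = s_1$, the bound $s_2-s_1 \le t_2-t_1$ is trivial. The main subtle point is the strong-convexity inequality at the constrained minimizer $s_i$, which cannot be obtained from a first-order condition alone because $s_i$ may sit on the boundary $\{0\}$ and $f$ is only assumed lower semi-continuous; the segment-based argument above sidesteps this without any appeal to subdifferentials, so it is the place to be careful. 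The hypothesis that $f$ attains its minimum at $0$ is not actually used in this firm non-expansiveness reasoning; it serves only to guarantee that the minimizers $s_i$ exist and lie in $[0,\infty)$.
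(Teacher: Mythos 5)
Your proof is correct, but it takes a genuinely different route from the paper's. You derive the firm non-expansiveness bound $(s_2-s_1)(t_2-t_1) \ge (s_2-s_1)^2$ by adding the two cross-evaluations of the strong-convexity inequality $g_i(s) \ge g_i(s_i) + (s-s_i)^2/(2\tau)$ at the minimizers; the quadratic terms collapse as you compute, and both conclusions then fall out of this single scalar inequality because $t_2-t_1>0$. The paper instead argues with one-sided derivatives: it first notes that $f$ is continuous and non-decreasing (whence $s_i \le t_i$ and $\sJ^f_{\tau}(t_i)$ is a singleton by the $\tau^{-1}$-convexity of $g_i$), deduces $s_1 \le s_2$ from the positivity of $f'_-(s)-(t_j-s)/\tau$ for $s>s_2$, and then combines the optimality conditions $f'_+(s_1) \ge (t_1-s_1)/\tau$ and $f'_-(s_2) \le (t_2-s_2)/\tau$ with the monotonicity $f'_+(s_1) \le f'_-(s_2)$ to get $t_1-s_1 \le t_2-s_2$. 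Your argument is the classical resolvent computation for monotone operators: it avoids one-sided derivatives and boundary-case discussions entirely, would work verbatim in a Hilbert space (with Cauchy--Schwarz yielding non-expansiveness there), and, as you correctly observe, does not use the hypothesis that the minimum of $f$ is attained at $0$; that hypothesis is exploited by the paper both inside this proof and elsewhere (e.g.\ to locate $\tilde{z}_1$ between $p$ and $\tilde{x}_1$ in the proof of Theorem~\ref{th:cont}). The paper's derivative argument is more elementary but intrinsically one-dimensional. One minor quibble: minimizers of $g_i$ lie in $[0,\infty)$ simply because that is the domain; what needs (and gets, from lower semi-continuity plus the coercivity coming from strong convexity) justification is their existence, which in any case is built into the lemma's hypothesis $s_i \in \sJ^f_{\tau}(t_i)$.
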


\begin{proof}
We give a proof for thoroughness.
Note that, by hypotheses, $f$ is continuous and non-decreasing on $[0,\infty)$.
Thus, $s_i \le t_i$ holds.
Observe also that, for each $t>0$,
the function $s \longmapsto f(s)+(t-s)^2/(2\tau)$ is $(\tau^{-1})$-convex
and has a unique minimizer.
Hence, we have $\sJ^f_{\tau}(t_i)=\{s_i\}$.

We denote by $f'_+$ and $f'_-$ the right and left derivatives of $f$, respectively.
Since
\[ f'_-(s) -\frac{t_1-s}{\tau} > f'_-(s) -\frac{t_2-s}{\tau} >0 \]
for all $s>s_2$, we have $s_1 \le s_2$.
In particular, $s_2=0$ implies $s_1=0$.
Now, suppose $s_2>0$.
Then we have, by the choices of $s_1$ and $s_2$,
\[  f'_+(s_1) -\frac{t_1-s_1}{\tau} \ge 0, \qquad f'_-(s_2) -\frac{t_2-s_2}{\tau} \le 0. \]
Since $f'_+(s_1) \le f'_-(s_2)$ by the convexity of $f$,
we obtain $t_1-s_1 \le t_2 -s_2$.
\end{proof}

We are ready to prove Theorem~\ref{th:cont}.
Recall that $D=d(p,x_1) \vee d(p,x_2)$ and we assume $d(p,y_1) \le d(p,y_2)$.
Let $\gamma_i:[0,d(p,x_i)] \lra X$ be a unit speed geodesic from $p$ to $x_i$ (along which $f$ is $K$-convex),
and $\bar{y}_i$ be a point in $\gamma_i$ closest to $y_i$.
It follows from \eqref{eq:dxg} and Proposition~\ref{pr:dpy} that
\begin{equation}\label{eq:dyy}
d(y_i,\bar{y}_i) \le (x_i|p)_{y_i} +2\delta
 \le \frac{2\sqrt{2\tau L\delta}}{\sqrt{K\tau +1}} +2\delta =:C_1.
\end{equation}

If $d(p,y_1) \ge (x_1|x_2)_p$, then we have
\[ d(p,\bar{y}_1) \wedge d(p,\bar{y}_2) \ge d(p,y_1) \wedge d(p,y_2) -C_1 \ge (x_1|x_2)_p -C_1. \]
Hence, we obtain from Lemma~\ref{lm:trian}, \eqref{eq:dyy} and Proposition~\ref{pr:dpy} that
\begin{align*}
12\delta
&\ge 2(x_1|x_2)_p -2(\bar{y}_1|\bar{y}_2)_p -2C_1 \\
&\ge 2(x_1|x_2)_p -2(y_1|y_2)_p -6C_1 \\
&= d(y_1,y_2) -d(x_1,x_2) -2(x_1|p)_{y_1} -2(x_2|p)_{y_2} +d(x_1,y_1) +d(x_2,y_2) -6C_1 \\
&\ge d(y_1,y_2) -d(x_1,x_2) +d(x_1,y_1) +d(x_2,y_2) -\frac{8\sqrt{2\tau L\delta}}{\sqrt{K\tau +1}} -6C_1.
\end{align*}
In the case of $K>0$ and $\tau>K^{-1}$, Lemma~\ref{lm:dxy} further implies
\[ 12\delta
 \ge d(y_1,y_2) -d(x_1,x_2) +\bigg( 1-\frac{1}{K\tau} \bigg) \frac{f(x_1)+f(x_2)-2f(p)}{L}
 -\frac{8\sqrt{2\tau L\delta}}{\sqrt{K\tau +1}} -6C_1. \]
Thus, we have \eqref{eq:cont1} as well as \eqref{eq:cont1+}.

In the case of $d(p,y_1) <(x_1|x_2)_p$,
we shall essentially reduce to the $1$-dimensional situation (on $\gamma_2$)
and apply Lemma~\ref{lm:1D}.
We first consider ``projections'' to $\gamma_i$.
Take
\[ z_i \in \argmin_{z \in \gamma_i([0,d(p,x_i)])} \bigg\{ f(z) +\frac{d^2(x_i,z)}{2\tau} \bigg\}. \]
Since
\begin{align*}
f(z_i)+\frac{d^2(x_i,z_i)}{2\tau}
&\ge f(y_i)+\frac{d^2(x_i,y_i)}{2\tau} \\
&\ge f(\bar{y}_i) -Ld(y_i,\bar{y}_i) +\frac{d^2(x_i,\bar{y}_i)}{2\tau}
 -\frac{d(p,x_i)}{\tau} d(y_i,\bar{y}_i) \\
&\ge  f(\bar{y}_i)+\frac{d^2(x_i,\bar{y}_i)}{2\tau}
 -\bigg( L+\frac{D}{\tau} \bigg) C_1
\end{align*}
(we used in the second inequality the fact $d(x_i,y_i) \le d(p,x_i)$ from $y_i \in \sJ^f_{\tau}(x_i)$
as well as $d(x_i,\bar{y}_i) \le d(p,x_i)$ since $\bar{y}_i$ is on $\gamma_i$) and
\begin{equation}\label{eq:K+tau}
f(\bar{y}_i)+\frac{d^2(x_i,\bar{y}_i)}{2\tau}
 \ge f(z_i)+\frac{d^2(x_i,z_i)}{2\tau} +\frac{K+\tau^{-1}}{2}d^2(\bar{y}_i,z_i)
\end{equation}
by the $(K+\tau^{-1})$-convexity of $t \longmapsto f(\gamma_i(t)) +d^2(x_i,\gamma_i(t))/(2\tau)$,
we have
\begin{equation}\label{eq:dyz}
d^2(\bar{y}_i,z_i) \le \frac{2\tau}{K\tau +1}
 \bigg( L+\frac{D}{\tau} \bigg) C_1 =:C_2^2.
\end{equation}
Then, we put $\tilde{x}_1:=\gamma_1((x_1|x_2)_p)$ and take
\[ \tilde{z}_1 \in \argmin_{z \in \gamma_1([0,d(p,x_1)])} \bigg\{ f(z) +\frac{d^2(\tilde{x}_1,z)}{2\tau} \bigg\}. \]
Since $f \circ \gamma_1$ is non-decreasing, $\tilde{z}_1$ lies between $p$ and $\tilde{x}_1$.
Moreover, we have $d(p,\tilde{z}_1) \le d(p,z_1)$ by $s_1 \le s_2$ in Lemma~\ref{lm:1D}.

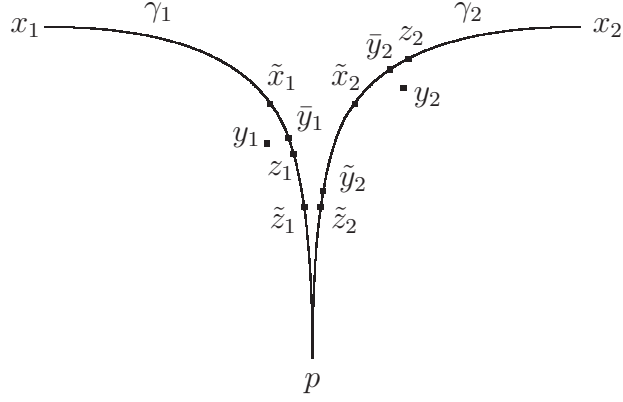
\begin{figure}
\centering
\begin{picture}(400,150)

\qbezier(200,15)(200,90)(185,110)
\qbezier(185,110)(165,140)(100,140)
\qbezier(200,15)(200,90)(215,110)
\qbezier(215,110)(235,140)(300,140)

\put(197,4){$p$}
\put(87,138){$x_1$}
\put(305,138){$x_2$}
\put(137,144){$\gamma_1$}
\put(253,144){$\gamma_2$}
\put(170,97){$y_1$}
\put(238,112){$y_2$}

\put(194,103){$\bar{y}_1$}
\put(220,130){$\bar{y}_2$}
\put(183,84){$z_1$}
\put(232,135){$z_2$}

\put(183,118){$\tilde{x}_1$}
\put(207,118){$\tilde{x}_2$}
\put(184,64){$\tilde{z}_1$}
\put(207,64){$\tilde{z}_2$}

\put(210,80){$\tilde{y}_2$}
\put(182,95){\rule{2pt}{2pt}}
\put(233,116){\rule{2pt}{2pt}}

\put(190,97){\rule{2pt}{2pt}}
\put(228,123){\rule{2pt}{2pt}}
\put(192,91){\rule{2pt}{2pt}}
\put(235,127){\rule{2pt}{2pt}}

\put(183,110){\rule{2pt}{2pt}}
\put(215,110){\rule{2pt}{2pt}}
\put(196,71){\rule{2pt}{2pt}}
\put(202,71){\rule{2pt}{2pt}}

\put(203,77){\rule{2pt}{2pt}}
\end{picture}
\caption{The case of $d(p,y_1) <(x_1|x_2)_p$}\label{fig:cont}
\end{figure}

Next, we further project from $\gamma_1$ to $\gamma_2$.
Precisely, we put $\tilde{x}_2:=\gamma_2((x_1|x_2)_p)$ and $\tilde{z}_2:=\gamma_2(d(p,\tilde{z}_1))$.
Then Lemma~\ref{lm:tripod} implies
\begin{equation}\label{eq:tilde}
d(\tilde{x}_1,\tilde{x}_2) \le 4\delta, \qquad d(\tilde{z}_1,\tilde{z}_2) \le 4\delta.
\end{equation}
Now we claim that
\begin{equation}\label{eq:dyz'}
d(\tilde{z}_1,y_2) \ge d(y_1,y_2) -8\delta -9C_1 -5C_2.
\end{equation}
Since $d(p,\tilde{z}_2) =d(p,\tilde{z}_1) \le d(p,z_1)$ and
\[ d(p,\bar{y}_2) \ge d(p,y_2) -C_1 \ge d(p,y_1) -C_1 \ge d(p,z_1) -2C_1 -C_2 \]
by \eqref{eq:dyy} and \eqref{eq:dyz}, we find
\begin{align*}
d\big( \gamma_2 \big( d(p,z_1) \wedge d(p,x_2) \big),\bar{y}_2 \big)
&= \big| d(p,\bar{y}_2) -d(p,z_1) \wedge d(p,x_2) \big| \\
&\le d(p,\bar{y}_2) -d(p,z_1) \wedge d(p,x_2) +4C_1+2C_2 \\
&\le d(p,\bar{y}_2) -d(p,\tilde{z}_2) +4C_1+2C_2 \\
&\le d(\tilde{z}_2,\bar{y}_2) +4C_1+2C_2.
\end{align*}
Moreover, it follows from $d(p,z_1) \le d(p,y_1)+C_1+C_2 <(x_1|x_2)_p +C_1+C_2$,
$(x_1|x_2)_p \le d(p,x_2)$ and Lemma~\ref{lm:tripod} that
\begin{align*}
&d\big( \gamma_2 \big( d(p,z_1) \wedge d(p,x_2) \big),z_1 \big) \\
&\le d\big( \gamma_2 \big( d(p,z_1) \wedge (x_1|x_2)_p \big),
 \gamma_1 \big( d(p,z_1) \wedge (x_1|x_2)_p \big) \big) +2C_1 +2C_2 \\
&\le 4\delta +2C_1 +2C_2.
\end{align*}
Together with \eqref{eq:dyy}, \eqref{eq:tilde} and \eqref{eq:dyz},
we can see the claim \eqref{eq:dyz'} as
\begin{align*}
d(\tilde{z}_1,y_2)
&\ge d(\tilde{z}_2,\bar{y}_2) -4\delta -C_1 \\
&\ge d\big( \gamma_2 \big( d(p,z_1) \wedge d(p,x_2) \big),\bar{y}_2 \big) -4\delta -5C_1 -2C_2 \\
&\ge d(z_1,\bar{y}_2) -d\big( \gamma_2 \big( d(p,z_1) \wedge d(p,x_2) \big),z_1 \big) -4\delta -5C_1 -2C_2 \\
&\ge d(z_1,\bar{y}_2) -8\delta -7C_1 -4C_2 \\
&\ge d(y_1,y_2) -8\delta -9C_1 -5C_2.
\end{align*}

We can also show that
\[ \tilde{y}_2 \in \argmin_{y \in \gamma_2([0,d(p,x_2)])} \bigg\{ f(y) +\frac{d^2(\tilde{x}_2,y)}{2\tau} \bigg\} \]
is close to $\tilde{z}_2$ in a similar way.
Namely, we observe from \eqref{eq:tilde}, $d(\gamma_1(d(p,\tilde{y}_2)),\tilde{y}_2) \le 4\delta$
from Lemma~\ref{lm:tripod}, and $d(p,\tilde{x}_1)=d(p,\tilde{x}_2)=(x_1|x_2)_p$ that
\begin{align*}
f(\tilde{y}_2) +\frac{d^2(\tilde{x}_2,\tilde{y}_2)}{2\tau}
&\ge f(\tilde{y}_2) +\frac{d^2(\tilde{x}_1,\tilde{y}_2)}{2\tau}
 -\frac{2d(\tilde{x}_2,\tilde{y}_2) +4\delta}{2\tau}4\delta \\
&\ge f(\tilde{y}_2) +\frac{d^2(\tilde{x}_1,\tilde{y}_2)}{2\tau}
 -\frac{d(p,\tilde{x}_2) +2\delta}{\tau}4\delta \\
&\ge f\big( \gamma_1 \big( d(p,\tilde{y}_2) \big) \big) +\frac{d^2(\tilde{x}_1,\gamma_1(d(p,\tilde{y}_2)))}{2\tau} \\
&\quad -\bigg( L+\frac{(x_1|x_2)_p+2\delta}{\tau} \bigg) 4\delta
 -\frac{(x_1|x_2)_p +2\delta}{\tau}4\delta.
\end{align*}
Then, by the choice of $\tilde{z}_1$, \eqref{eq:tilde}, $d(\tilde{x}_1,\tilde{z}_1)=d(\tilde{x}_2,\tilde{z}_2)$
and \eqref{eq:K+tau}, the right hand side is bounded from below by
\begin{align*}
&f(\tilde{z}_1) +\frac{d^2(\tilde{x}_1,\tilde{z}_1)}{2\tau}
 -\bigg( L+2\frac{(x_1|x_2)_p+2\delta}{\tau} \bigg) 4\delta \\
&\ge f(\tilde{z}_2) +\frac{d^2(\tilde{x}_2,\tilde{z}_2)}{2\tau}
 -\bigg( 2L+2\frac{(x_1|x_2)_p +2\delta}{\tau} \bigg) 4\delta \\
&\ge f(\tilde{y}_2) +\frac{d^2(\tilde{x}_2,\tilde{y}_2)}{2\tau}
 +\frac{K+\tau^{-1}}{2}d^2(\tilde{y}_2,\tilde{z}_2)
 -8\bigg( L+\frac{D+2\delta}{\tau} \bigg) \delta.
\end{align*}
This yields
\[ d^2(\tilde{y}_2,\tilde{z}_2)
 \le \frac{16\tau}{K\tau +1} \bigg( L+\frac{D+2\delta}{\tau} \bigg) \delta
 =:C_3^2. \]

Finally, we apply the $1$-dimensional contraction in Lemma~\ref{lm:1D}
to see $d(\tilde{y}_2,z_2) \le d(\tilde{x}_2,x_2)$.
Therefore, together with \eqref{eq:dyz'}, \eqref{eq:tilde}, \eqref{eq:dyy} and \eqref{eq:dyz},
we obtain
\begin{align*}
d(y_1,y_2)
&\le d(\tilde{z}_1,y_2) +8\delta +9C_1+5C_2 \\
&\le d(\tilde{z}_2,\bar{y}_2) +12\delta +10C_1+5C_2 \\
&\le d(\tilde{y}_2,z_2) +12\delta +10C_1+6C_2 +C_3 \\
&\le d(\tilde{x}_2,x_2) +12\delta +10C_1+6C_2 +C_3.
\end{align*}
Recalling $\tilde{x}_2=\gamma_2((x_1|x_2)_p)$, we observe
\[ d(\tilde{x}_2,x_2) =d(p,x_2) -(x_1|x_2)_p =d(x_1,x_2) -(p|x_2)_{x_1}. \]
This completes the proof of \eqref{eq:cont2} with
\[ C(K,L,D,\tau,\delta) =12\delta +10C_1+6C_2 +C_3. \]

\subsection{Further problems}\label{ssc:prob}

We discuss some possible directions of further researches,
besides improvements of the estimates in Theorems~\ref{th:conv}, \ref{th:cont}
and Corollary~\ref{cr:conv}.

\begin{enumerate}[(A)]
\item
As we mentioned in Subsection~\ref{ssc:Ghyp},
the Gromov hyperbolicity makes sense for discrete spaces.
Therefore, it is interesting to consider some generalizations of the results in this article
to discrete Gromov hyperbolic spaces.
Then, it is a challenging problem to formulate and analyze
$K$-convex functions on discrete Gromov hyperbolic spaces
(possibly for some special classes such as hyperbolic groups).
We refer to \cite{Mu} for the theory of convex functions on $\mathbb{Z}^N$
(called \emph{discrete convex analysis}),
and to \cite{Hi1,Ko} for some generalizations to graphs and trees, respectively.

\item
It is also interesting to consider \emph{simulated annealing} in Gromov hyperbolic spaces,
namely proximal point algorithm with noise.
With this method it is expected that one can approximate a global minimizer
even for quasi-convex functions or $K$-convex functions with $K<0$.

\item
Related to the above problems,
it is worthwhile considering ``convex functions of large scale'',
preserved by \emph{quasi-isometries}.
This would provide a natural generalization of our research
since the Gromov hyperbolicity is preserved by quasi-isometries between geodesic spaces
(see, e.g., \cite[Theorem~3.18]{Va}).
\end{enumerate}

\textit{Acknowledgements.}
I would like to thank Hiroshi Hirai for his comments on convex functions on discrete spaces.
This work was supported in part by JSPS Grant-in-Aid for Scientific Research (KAKENHI) 19H01786, 22H04942.

{\small

}


\begin{thebibliography}{99}

\bibitem{AGSbook}
L.~Ambrosio, N.~Gigli and G.~Savar\'e,
Gradient flows in metric spaces and in the space of probability measures. Second edition.
Lectures in Mathematics ETH Z\"urich.
Birkh\"auser Verlag, Basel, 2008.

\bibitem{AGS}
L.~Ambrosio, N.~Gigli and G.~Savar\'e,
Metric measure spaces with Riemannian Ricci curvature bounded from below.
Duke Math.\ J.\ {\bf 163} (2014), 1405--1490.

\bibitem{Ba}
M.~Ba\v{c}\'ak,
The proximal point algorithm in metric spaces.
Israel J.\ Math.\ \textbf{194} (2013), 689--701.

\bibitem{Babook}
M.~Ba\v{c}\'ak,
Convex analysis and optimization in Hadamard spaces.
De Gruyter Series in Nonlinear Analysis and Applications, \textbf{22}.
De Gruyter, Berlin, 2014.

\bibitem{BHV}
L.~J.~Billera, S.~P.~Holmes and K.~Vogtmann,
Geometry of the space of phylogenetic trees.
Adv.\ in Appl.\ Math.\ \textbf{27} (2001), 733--767.

\bibitem{BS}
M.~Bonk and O.~Schramm,
Embeddings of Gromov hyperbolic spaces.
Geom.\ Funct.\ Anal.\ \textbf{10} (2000), 266--306.

\bibitem{BCCM}
M.~Borassi, D.~Coudert, P.~Crescenzi and A.~Marino,
On computing the hyperbolicity of real-world graphs.
Algorithms---ESA 2015, 215--226, 
Lecture Notes in Comput.\ Sci., \textbf{9294}, Springer, Heidelberg, 2015.

\bibitem{Bo}
B.~H.~Bowditch,
A course on geometric group theory.
MSJ Memoirs, \textbf{16}. Mathematical Society of Japan, Tokyo, 2006.

\bibitem{Br}
H.~Br\'ezis,
Op\'erateurs maximaux monotones et semi-groupes de contractions dans les espaces de Hilbert (French).
North-Holland Publishing Co., Amsterdam-London; American Elsevier Publishing Co., Inc., New York, 1973.

\bibitem{BH}
M.~R.~Bridson and A.~Haefliger,
Metric spaces of non-positive curvature.
Grundlehren der mathematischen Wissenschaften, \textbf{319}.
Springer-Verlag, Berlin, 1999.

\bibitem{BF}
J.~Brock and B.~Farb,
Curvature and rank of Teichm\"uller space.
Amer.\ J.\ Math.\ \textbf{128} (2006), 1--22.

\bibitem{CCDDMV}
J.~Chalopin, V.~Chepoi, F.~F.~Dragan, G.~Ducoffe, A.~Mohammed and Y.~Vax\`es,
Fast approximation and exact computation of negative curvature parameters of graphs.
Discrete Comput.\ Geom.\ \textbf{65} (2021), 856--892.

\bibitem{CCHO}
J.~Chalopin, V.~Chepoi, H.~Hirai and D.~Osajda,
Weakly modular graphs and nonpositive curvature.
Mem.\ Amer.\ Math.\ Soc.\ \textbf{268} (2020), no.~1309.

\bibitem{CCL}
N.~Cohen, D.~Coudert and A.~Lancin,
On computing the Gromov hyperbolicity.
ACM J.\ Exp.\ Algorithmics \textbf{20} (2015), Article 1.6, 18 pp.

\bibitem{CL}
M.~G.~Crandall and T.~M.~Liggett,
Generation of semi-groups of nonlinear transformations on general Banach spaces.
Amer.\ J.\ Math.\ \textbf{93} (1971), 265--298.

\bibitem{DSU}
T.~Das, D.~Simmons and M.~Urba\'nski,
Geometry and dynamics in Gromov hyperbolic metric spaces.
With an emphasis on non-proper settings.
Mathematical Surveys and Monographs, \textbf{218}.
American Mathematical Society, Providence, RI, 2017.


\bibitem{EKS}
M.~Erbar, K.~Kuwada and K.-T.~Sturm,
On the equivalence of the entropic curvature-dimension condition
and Bochner's inequality on metric measure spaces.
Invent.\ Math.\ {\bf 201} (2015), 993--1071.

\bibitem{FIV}
H.~Fournier, A.~Ismail and A.~Vigneron,
Computing the Gromov hyperbolicity of a discrete metric space.
Inform.\ Process.\ Lett.\ \textbf{115} (2015), 576--579.

\bibitem{GR}
K.~Goebel and S.~Reich,
Uniform convexity, hyperbolic geometry, and nonexpansive mappings.
Monographs and Textbooks in Pure and Applied Mathematics, \textbf{83}.
Marcel Dekker, Inc., New York, 1984.

\bibitem{Gr}
M.~Gromov, Hyperbolic groups.
Essays in group theory, 75--263,
Math.\ Sci.\ Res.\ Inst.\ Publ., \textbf{8}, Springer, New York, 1987.



\bibitem{HH}
M.~Hamada and H.~Hirai,
Computing the nc-rank via discrete convex optimization on CAT(0) spaces.
SIAM J.\ Appl.\ Algebra Geom.\ \textbf{5} (2021), 455--478.

\bibitem{Hi1}
H.~Hirai,
Discrete convex functions on graphs and their algorithmic applications.
Combinatorial optimization and graph algorithms, 67--100, Springer, Singapore, 2017.

\bibitem{Hi}
H.~Hirai,
Convex analysis on Hadamard spaces and scaling problems.
Found.\ Comput.\ Math.\ \textbf{24} (2024), 1979--2016.

\bibitem{Jo}
J.~Jost,
Convex functionals and generalized harmonic maps into spaces of nonpositive curvature.
Comment.\ Math.\ Helv.\ \textbf{70} (1995), 659--673.

\bibitem{Jo2}
J.~Jost, Nonlinear Dirichlet forms.
New directions in Dirichlet forms, 1--47,
AMS/IP Stud.\ Adv.\ Math., \textbf{8}, Amer.\ Math.\ Soc., Providence, RI, 1998.

\bibitem{KN}
A.~Karlsson and G.~A.~Noskov,
The Hilbert metric and Gromov hyperbolicity.
Enseign.\ Math.\ (2) \textbf{48} (2002), 73--89.

\bibitem{Ko}
V.~Kolmogorov,
Submodularity on a tree: unifying $L^{\natural}$-convex and bisubmodular functions.
Mathematical foundations of computer science 2011, 400--411,
Lecture Notes in Comput.\ Sci., \textbf{6907}, Springer, Heidelberg, 2011.


\bibitem{Ly}
A.~Lytchak, Open map theorem for metric spaces.
St.\ Petersburg Math.\ J.\ \textbf{17} (2006), 477--491.

\bibitem{MW}
H.~A.~Masur and M.~Wolf,
Teichm\"uller space is not Gromov hyperbolic.
Ann.\ Acad.\ Sci.\ Fenn.\ Ser.\ A I Math.\ \textbf{20} (1995), 259--267.

\bibitem{Ma}
U.~F.~Mayer,
Gradient flows on nonpositively curved metric spaces and harmonic maps.
Comm.\ Anal.\ Geom.\ \textbf{6} (1998), 199--253.

\bibitem{Mi}
I.~Miyadera, Nonlinear semigroups.
Translated from the 1977 Japanese original by Choong Yun Cho.
Translations of Mathematical Monographs, \textbf{109}.
American Mathematical Society, Providence, RI, 1992.

\bibitem{Mu}
K.~Murota, Discrete convex analysis.
SIAM Monographs on Discrete Mathematics and Applications.
Society for Industrial and Applied Mathematics (SIAM), Philadelphia, PA, 2003.

\bibitem{Ogra}
S.~Ohta,
Gradient flows on Wasserstein spaces over compact Alexandrov spaces.
Amer.\ J.\ Math.\ \textbf{131} (2009), 475--516.

\bibitem{Obook}
S.~Ohta, Comparison Finsler geometry.
Springer Monographs in Mathematics.
Springer, Cham, 2021.

\bibitem{OP1}
S.~Ohta and M.~P\'alfia,
Discrete-time gradient flows and law of large numbers in Alexandrov spaces.
Calc.\ Var.\ Partial Differential Equations \textbf{54} (2015), 1591--1610.

\bibitem{OP2}
S.~Ohta and M.~P\'alfia,
Gradient flows and a Trotter--Kato formula of semi-convex functions on CAT$(1)$-spaces.
Amer.\ J.\ Math.\ \textbf{139} (2017), 937--965.

\bibitem{OSnc}
S.~Ohta and K.-T.~Sturm,
Non-contraction of heat flow on Minkowski spaces,
Arch.\ Ration.\ Mech.\ Anal.\ \textbf{204} (2012), 917--944.

\bibitem{Pe}
A.~Petrunin, Semiconcave functions in Alexandrov's geometry.
Surveys in differential geometry. Vol.~XI, 137--201,
Surv.\ Differ.\ Geom., \textbf{11}, Int.\ Press, Somerville, MA, 2007.

\bibitem{Ro}
J.~Roe, Lectures on coarse geometry.
University Lecture Series, \textbf{31}.
American Mathematical Society, Providence, RI, 2003.

\bibitem{Sa}
G.~Savar\'e,
Gradient flows and diffusion semigroups in metric spaces under lower curvature bounds.
C.\ R.\ Math.\ Acad.\ Sci.\ Paris \textbf{345} (2007), 151--154.

\bibitem{St}
K.-T.~Sturm,
Gradient flows for semiconvex functions on metric measure spaces---existence,
uniqueness, and Lipschitz continuity.
Proc.\ Amer.\ Math.\ Soc.\ \textbf{146} (2018), 3985--3994.

\bibitem{Va}
J.~V\"ais\"al\"a, Gromov hyperbolic spaces.
Expo.\ Math.\ \textbf{23} (2005), 187--231.

\bibitem{Vi}
C.~Villani, Optimal transport, old and new.
Grundlehren der mathematischen Wissenschaften, \textbf{338}.
Springer-Verlag, Berlin, 2009.

\end{thebibliography}
\end{document}